\newtheorem{theorem}{Theorem}[section]
\newtheorem{corollary}{Corollary}
\newtheorem{lemma}[theorem]{Lemma}
\newtheorem{proof}{Proof}
\newtheorem{definition}[theorem]{Definition}
\newtheorem{remark}{Remark}
\title{A Gaussian quadrature rule for oscillatory integrals on a bounded interval}
\date{\today}
\author{A. Asheim, A. Dea\~no, D. Huybrechs, and H. Wang{\footnote{\{andreas.asheim,alfredo.deano,daan.huybrechs,haiyong.wang\}@cs.kuleuven.be}} {\footnote{
This research was supported by the Fund for Scientific Research Flanders through Research Project G.0617.10. A. Asheim is supported by the Norwegian Research Council's Espedal Fellowship.}}\\[5mm]
Department of Computer Science\\[2mm]
 University of Leuven, Belgium}
\begin{document}

\maketitle





\begin{abstract}
We investigate a Gaussian quadrature rule and the corresponding orthogonal polynomials for the oscillatory weight function $e^{i\omega x}$ on the interval $[-1,1]$. We show that such a rule attains high asymptotic order, in the sense that the quadrature error quickly decreases as a function of the frequency $\omega$. However, accuracy is maintained for all values of $\omega$ and in particular the rule elegantly reduces to the classical Gauss-Legendre rule as $\omega \to 0$. The construction of such rules is briefly discussed, and though not all orthogonal polynomials exist, it is demonstrated numerically that rules with an even number of points are always well defined. We show that these rules are optimal both in terms of asymptotic order as well as in terms of polynomial order.
\end{abstract}

\textbf{ AMS Subject classification:} Primary: 65D30, 33C47; Secondary: 65D32, 34E05.

\textbf{Keywords:} Numerical quadrature, Gaussian quadrature, highly oscillatory quadrature, orthogonal polynomials. 	

\bigskip

Highly oscillatory integrals are ubiquitous in applied mathematics. The numerical evaluation of such integrals has been a rich field of study in the last decade, following a series of hallmark papers by Iserles \& N\o{}rsett \cite{Iserles:2004us,Iserles:2005tj,Iserles:2006fi,Iserles:2004ke,Iserles:2006uh}. Prompted by the unexpected succes of modified Magnus expansions for oscillatory differential equations \cite{Iserles:2002gg}, these authors introduced asymptotic and Filon-type methods as a generalisation of the classical Filon method \cite{filon1928quadrature}. Other innovations in this field include the numerical method of steepest descent \cite{Huybrechs:2006gi}, based on numerical evaluation of steepest descent integrals, and Levin-type methods \cite{Levin:1996kp,Olver:2006jp,Olver:2010jj}, based on solving an associated differential equation. A common property of all these methods is that high asymptotic order can be 
attained, by which we mean an error that behaves like $\mathcal{O}(\omega^{-\alpha})$, $\omega \to\infty$, for a positive $\alpha$. Unlike classical asymptotic expansions, whose error behaves similarly as $\omega\to\infty$, these methods are in principle not limited in accuracy for a fixed $\omega$.

This work concerns an \emph{asymptotically optimal} oscillatory quadrature rule that is valid for all frequencies. The discussion will focus on the Fourier-integral
\begin{equation}\label{eq:HOI}
I[f]:=\int_{-1}^1f(x)e^{i\omega x}{\rm d}x,
\end{equation}
where we shall consider $\omega$ ranging from $0$ to $\infty$. Of the above mentioned quadrature rules, the numerical method of steepest descent can be considered to be asymptotically optimal; it delivers an error of size $\mathcal{O}(\omega^{-2n-1})$, when using $2n$ quadrature points in the complex plane for this integral. Filon-type methods are based on interpolation of the amplitude function $f$. By choosing interpolation points that scale towards the endpoints like $1/\omega$, they deliver $\mathcal{O}(\omega^{-n-1})$ error with the same number of points \cite{Iserles:2004ke}. In \cite{Huybrechs:2012gm} it was pointed out that by choosing the interpolation points for the Filon-type method to be precisely the quadrature points in the complex plane of the numerical steepest descent method, the so-called \emph{superinterpolation points}, also here an error of $\mathcal{O}(\omega^{-2n-1})$ can actually be attained with only $2n$ points.

Next, one could ask, how well these methods fare when $\omega$ is small. Unfortunately, the superinterpolation points are unbounded in the limit $\omega\to0$. As such, they are not a good choice for small $\omega$. For $\omega=0$ the Gauss-Legendre points are optimal in the sense that they integrate exactly polynomials of degree $2n-1$ using $n$ points. As for Filon-type methods, the interpolation points can be chosen such that they approach the Legendre points as $\omega \to 0$ \cite{Iserles:2004ke}. However, it is not clear what is an optimal way to do so. In the context of the so-called exponential fitting methods, such strategies have been described with relatively few quadrature points \cite{Ixaru:2001fe}, or with an heuristic dependence on $\omega$ \cite{Ledoux:2012ww}. However, in both cases, using $2n$ 
points still yields an error of size $\mathcal{O}(\omega^{-n-1})$, which is not optimal.

In a recent work, \cite{integraltransforms}, it was shown that Gaussian quadrature rules can be constructed for certain oscillatory integral transforms, i.e., integrals over unbounded domains. This goes against common wisdom, saying that Gaussian rules can be found for positive weight functions only. A more correct statement is that existence and uniqueness proofs, as well as many construction methods, rely on positive weight functions. For integral transforms of the form 
\[
\int_0^\infty f(x)e^{i\omega x}{\rm d}x,
\]
it was shown in \cite{integraltransforms} that applying the Gauss-Laguerre rule along the path of steepest descent yields a Gaussian rule, which is exact for $f$ being a polynomial of degree $\leq 2n-1$. Moreover, a clear connection was shown between polynomial accuracy and asymptotic accuracy for such integrals, where higher polynomial accuracy means higher asymptotic accuracy. Thus, the Gaussian rule for this integral transform attains an error $\mathcal{O}(\omega^{-2n-1})$. Similar results were shown for other oscillators, all resulting in rules with nodes in the complex plane.

The topic of the current work is a further investigation of Gaussian rules for oscillatory weight functions, but now on a bounded domain. This, we will see, is a more difficult case, since it is no longer possible to remove the dependency on $\omega$ by a simple rescaling as in the unbounded case. For integral \eqref{eq:HOI}, the oscillatory part $e^{i\omega x}$ is the weight function, and one thus seeks rules that integrate polynomials up to degree $2n-1$ exactly. This endeavour poses several problems. The rules depend on $\omega$ in a non-trivial way, and must be computed numerically. One cannot guarantee existence of the rules, though numerical evidence indicates that all rules with an even number of points exist. On the other hand, the resulting rules reduce elegantly to Gauss-Legendre by construction in the limit $\omega\to 0$. Moreover, it will be proved, under mild assumptions, that the quadrature points tend to the superinterpolation points in the high-frequency limit $\omega\to \infty$. 
This implies that the method yields optimal asymptotic order in addition to optimal polynomial order. These observations are the basis of the statement that this Gaussian rule is truly optimal for oscillatory integrals of the form \eqref{eq:HOI} throughout the frequency regime.

This paper is built up as follows. In \S\ref{S:prel}, we recall some preliminary facts regarding Gaussian quadrature and highly oscillatory quadrature methods. In \S\ref{S:experiments}, analytic expressions for the cases $n=1$ and $n=2$, as well as several numerical experiments, shed light on the questions of existence and other properties of Gaussian rules for integrals of the form \eqref{eq:HOI}. This section is concluded with a set of conjectures, most of which are proved in \S\ref{S:properties}, on the properties of the orthogonal polynomials, and \S\ref{S:4}, on the properties of the quadrature rule.

\section{Preliminaries}\label{S:prel}

\subsection{Gaussian quadrature}

An $n$-point Gaussian quadrature rule $\{x_j,w_j\}_{j=1}^n$ is a quadrature rule which has optimal polynomial accuracy, i.e., it is exact for all polynomials of degree $\leq 2n-1$, 
\[
\int_a^b f(x)h(x){\rm d}x = \sum_{j=1}^n w_jf(x_j),\qquad f\in \mathcal{P}_{2n-1}.
\]
It is well known that the nodes of a Gaussian rule are precisely the zeros of the $n$-th orthogonal polynomial with respect to the weight function $h$ \cite{gautschi2004orthogonal}. Classical theory of Gaussian quadrature ensures that such rules exist and that they are uniquely defined whenever $h$ is positive. Moreover, the positivity of the weight function also ensures that $x_j\in [a,b]$, $j=1,\hdots,n$, and that the weights $w_j$ are all positive.

The monic orthogonal polynomial $p_n$ can be computed in various ways, for example by the recurrence 
\begin{equation}\label{eq:recurrence}
p_{k+1}(x)=(x-\alpha_k)p_k(x)-\beta_kp_{k-1}(x),
\end{equation}
with initial values $p_{-1}=0$ and $p_0=1$. Defining the pairing
\[
(f,g):=\int_a^b f(x)g(x)h(x){\rm d}x,
\]
which is an inner product if $h$ is positive, the recurrence coefficients are given by
\begin{equation}\label{eq:reccoeffs}
\alpha_k=\frac{(xp_k,p_k)}{(p_k,p_k)}
,\quad
\beta_k=\frac{(p_k,p_k)}{(p_{k-1},p_{k-1})}.
\end{equation}
Alternatively, writing $p_n(x)=x^n+\sum_{k=0}^{n-1} a_k x^k$, the coefficients $a_0,\hdots, a_{n-1}$ satisfy the linear system
\begin{equation}\label{eq:asystem}
 \begin{pmatrix}
 \mu_0 & \mu_1 & \hdots &\mu_{n-1} \\
 \mu_1 & \mu_2 & \hdots &\mu_{n} \\
 \vdots & \vdots & \ddots &\vdots \\
 \mu_{n-1} & \mu_n & \hdots &\mu_{2n-2} \\
 \end{pmatrix}
  \begin{pmatrix}
	a_0\\a_1\\ \vdots \\ a_{n-1}
 \end{pmatrix} 
  =
  -\begin{pmatrix}
	\mu_n\\ \mu_{n+1} \\ \vdots \\ \mu_{2n-1}
	 \end{pmatrix},
\end{equation}
where the moments $\mu_m$ are defined as
\[
 \mu_m:=\int_a^b x^mh(x){\rm d}x.
\]

\subsection{Gaussian quadrature and the method of steepest descent}

In the method of steepest descent, an oscillatory integral of the form \eqref{eq:HOI} is rewritten along the so-called paths of steepest descent. Assuming $f$ is analytic, we may deform the path of integration as follows:
\begin{equation}\label{eq:sd}
\int_{-1}^1f(x)e^{i\omega x}{\rm d}x =\frac{ie^{-i\omega}}{\omega}\int_0^\infty f(-1+it\omega^{-1})e^{-t}{\rm d}t-\frac{ie^{i\omega}}{\omega}\int_0^\infty f(1+it\omega^{-1})e^{-t}{\rm d}t.
\end{equation}
Applying the Gauss-Laguerre quadrature on the two resulting integrals yields an error that decays like $\mathcal{O}(\omega^{-2n-1})$, when using $n$ points for each integral (so $2n$ points in total) \cite{Huybrechs:2006gi}. The Filon-type method, due to Iserles \& N\o rsett\ \cite{Iserles:2005tj}, is based on polynomial interpolation of the amplitude function $f$. In \cite{Huybrechs:2012gm} it was shown that using the complex points obtained by applying Gaussian quadrature on the steepest descent integrals in \eqref{eq:sd} as interpolation points in a Filon-type method, also yields an error decay of $\mathcal{O}(\omega^{-2n-1})$. Following the terminology of \cite{Huybrechs:2012gm}, these points are referred to as superinterpolation points:

\begin{definition}\label{def:superinterpolation}
Let $\{\xi_j,\eta_j\}_{j=1}^n$ denote the $n$ point Gauss-Laguerre quadrature rule. The corresponding $2n$ superinterpolation points are defined as
\[
\left\{-1+ \frac{i \xi_j}{\omega}\right\}\cup \left\{1+ \frac{i \xi_j}{\omega}\right\}_{j=1}^n.
\]
\end{definition}
The corresponding quadrature weights for these points are
\[
\left\{\frac{\eta_j}{\omega}\right\}\cup \left\{\frac{\eta_j}{\omega}\right\}_{j=1}^n.
\]
Note that Filon-type methods have the advantage that accuracy can be increased by adding interpolation points wherever they are needed, while maintaining asymptotic accuracy. This can be used to minimise the effect of eventual singularities in the complex plane. This leads, of course, to different weights. Also note that the superinterpolation points are not well defined in the limit $\omega\to0$.

\section{Polynomials orthogonal to $e^{i\omega x}$ on $[-1,1]$, analytic examples and numerical experiments}\label{S:experiments}

Considering polynomials orthogonal w.r.t. the weight $e^{i\omega x}$ on $[-1,1]$, the non-positive weight function does not enable us to use classical existence and uniqueness theory. However, assuming existence for the time being, at least for some $n$ and $\omega$, we denote the $n$-th monic orthogonal polynomial by $p_n^\omega$. The polynomial $p_n^\omega$ is orthogonal in the sense that,
\[
\int_{-1}^1p_n^\omega(x)x^je^{i\omega x}{\rm d}x=0, \qquad j=0,1,\hdots,n-1, \quad \omega\geq 0.
\]
The moments can be computed explicitly,
\[
\mu_m^\omega=\int_{-1}^1 x^me^{i\omega x}{\rm d}x = (-1)^m(i \omega )^{-1-m}(\Gamma(1+m,-i \omega )-\Gamma(1+m,i \omega )),
\]
where $\Gamma(z,a)$ is the incomplete Gamma functions \cite{olver2010nist}, or by the recurrence
\[
\mu_m^\omega=\frac{e^{i\omega}-(-1)^me^{-i\omega}}{i\omega}-\frac{m}{i\omega}\mu_{m-1}^\omega,\qquad \mu_0^\omega=\frac{2\sin\omega}{\omega},
\]
which is obtained through integration by parts. This recurrence should be used with great care, since it's forward stable only when $m<\omega$. For $m>\omega$, the backward recursion should be used. Assuming existence of these polynomials, they can be computed using the linear system for the coefficients \eqref{eq:asystem}. For the cases $n=1$ and $n=2$ the polynomials can be computed analytically, and this gives some insight into the nature of the higher order rules.

\subsection{The case $n=1$}\label{ss:nis1}

In the case $n=1$ the orthogonal polynomial takes the form $p_1^\omega(x)=x+a_0$, where 
\begin{equation}\label{eq:n1}
a_0 = -\frac{\mu_1^\omega}{\mu_0^\omega} = \frac{i}{\tan \omega}-\frac i \omega.
\end{equation}
Note that the single quadrature point $-a_0$ remains on the imaginary axis. The rule is undefined when $\omega$ is a multiple of $\pi$, except in the limit $\omega\to 0$ where $a_0\to 0$. This limit corresponds to the Gauss-Legendre rule.

\subsection{The case $n=2$}

\begin{figure}
\begin{center}
\parbox{\textwidth}{
\includegraphics[width=\columnwidth]{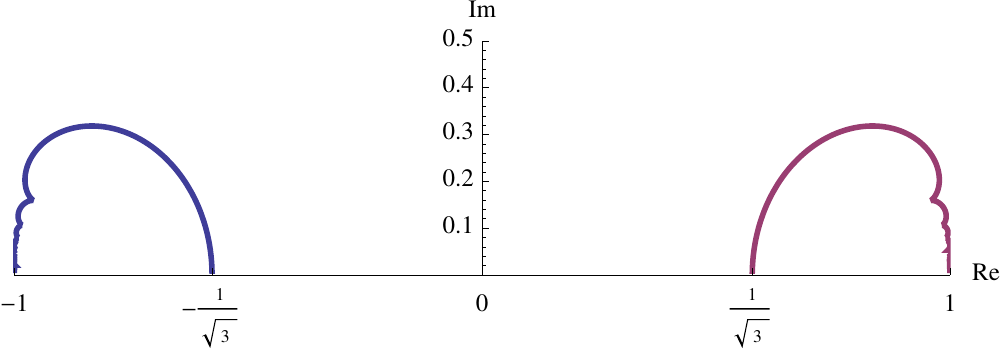}
\caption{Two Gaussian points for the oscillatory integral start out at the Gauss-Legendre points, $\pm 1/\sqrt{3}$, for $\omega=0$ and follow these curves in the complex plane for increasing $\omega$.}\label{Fig:2Gausspoints}
}
\parbox{\textwidth}{
\includegraphics[width=\columnwidth]{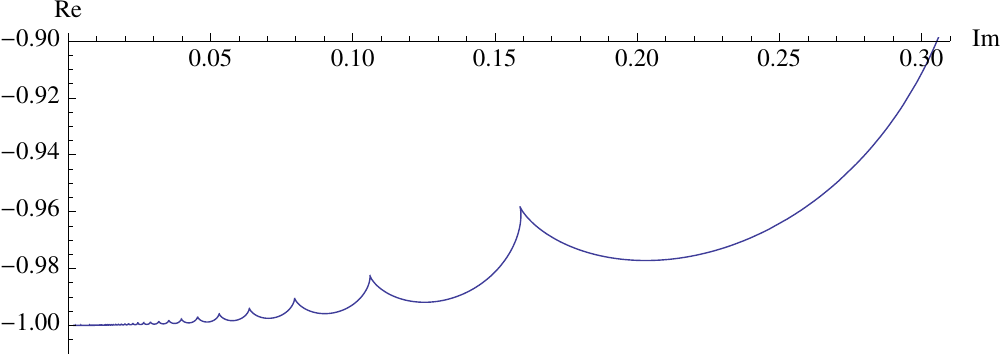}
\caption{Reflecting and zooming in on the left curve in Figure \ref{Fig:2Gausspoints}.}
}
\end{center}
\end{figure}

In the case $n=2$, one obtains for $p_2^\omega(x)=x^2+a_1 x + a_0$ the coefficients
\[
a_0=\frac{2+3 \omega ^2-2 \omega ^4+\left(-2+\omega ^2\right) \cos (2\omega)-4 \omega  \sin (2\omega)}{\omega ^2 \left(-1+2 \omega ^2+\cos (2\omega)\right)}.
\]
\[
a_1=-\frac{2 i \left(-2+2 \omega ^2+2 \cos (2\omega)+\omega  \sin (2\omega)\right)}{\omega  \left(-1+2 \omega ^2+\cos (2\omega)\right)},
\]
In the low-frequency limit, for $\omega\to0$, we recover the Gauss-Legendre rule again. Observe that
\[
\begin{aligned}
a_0&=-\frac{1}{3}-\frac{2}{45}\omega^2+\mathcal{O}(\omega^4),\\
a_1&=-\frac{4i}{15}\omega+\frac{4i}{1575}\omega^3+\mathcal{O}(\omega^3).\\
\end{aligned}
\]
Since the roots $x_\pm$ satisfy $-x_{+}-x_{-}=a_1$ and $x_+x_-=a_0$, it follows that $x_{\pm}=\pm\frac1{\sqrt{3}} +\mathcal{O}(\omega)$. They are a perturbation of the Gauss-Legendre nodes as expected.

For $\omega\to\infty$, we observe that
\[
\begin{aligned}
a_0&=-1+\frac{2\cos^2 2\omega}{\omega^2}-\frac{2\sin 2\omega}{\omega^3}
+\mathcal{O}(\omega^{-4}),\\
a_1&=-\frac{2i}{\omega}-\frac{i\sin 2\omega}{\omega^2}
+\frac{2i\sin^2 2\omega}{\omega^3}+\mathcal{O}(\omega^{-4}),
\end{aligned}
\]
leading to $x_\pm=\pm 1+\frac{i}{\omega} +\mathcal{O}(\omega^{-2})$. These turn out to be the two-point superinterpolation points as defined in Definition \ref{def:superinterpolation} -- recall that $1$ is the single root of the Laguerre polynomial of degree $1$.

The roots of the polynomials, which are the two quadrature points for the Gaussian rule, are given explicitly by
\begin{multline*}
x_{\pm}=\left(\omega  \left(-1+2 \omega ^2+\cos 2\omega\right)\right)^{-1} \Big{[} i \left(-2+2 \omega ^2+2 \cos 2\omega+\omega  \sin 2\omega\right)\\
 \pm  \sqrt{-3+6 \omega ^2-12 \omega ^4+4 \omega ^6+\left(4-6 \omega ^2\right) \cos 2\omega-\cos 4 \omega+4 \omega ^3 \sin 2\omega}\Big{]}.
\end{multline*}
Although this expression is rather complicated, one sees that the points exist for all $\omega$, unlike in the case $n=1$. Fig. \ref{Fig:2Gausspoints} shows the curves that the two quadrature points trace out in the complex plane as $\omega$ increases. The qualitative behaviour seen in this figure also appears for higher $n$. The points leave the real line and drift into the complex plane orthogonal to the real line. After an excursion into the complex plane the points attract in a rather irregular manner towards the two endpoints of the interval. The curves appear to have cusps for certain values of $\omega$. Regarding the curves as parametric curves, it is clear that a cusp can only happen at a singular point, i.e., where $\frac{\partial x_{\pm}}{\partial \omega}$ vanishes.

\begin{figure}
\begin{center}
\includegraphics[width=.7\columnwidth]{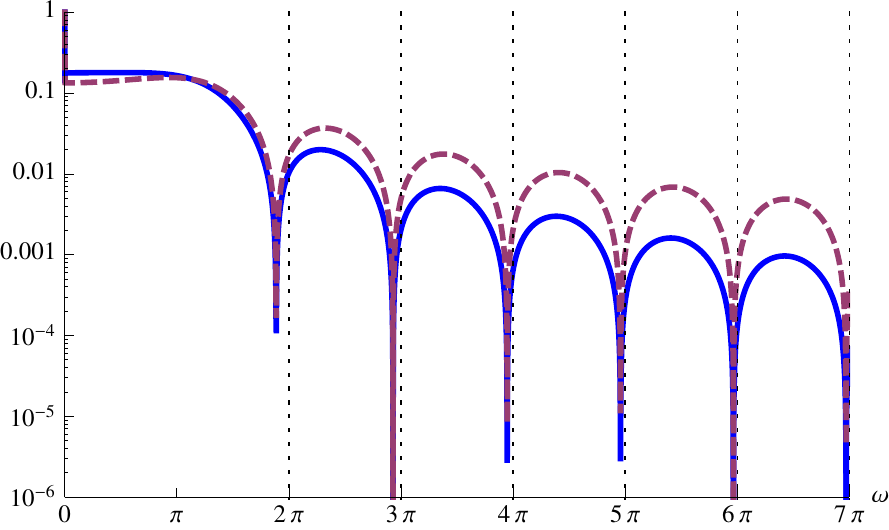}
\caption{The absolute value of $(p_2^\omega,p_2^\omega)$ (continuous) and $\frac{\partial x_1}{\partial \omega}$ (dashed) as a function of $\omega$.}\label{Fig:norm}
\end{center}
\end{figure}

In order to compute the recurrence coefficients, defined in \eqref{eq:reccoeffs}, that would enable us to compute $p_3^\omega$, we need the quantity
\[
(p_2^\omega,p_2^\omega)=\int_{-1}^1 (p_2^\omega(x))^2e^{i\omega x}{\rm d}x=-\frac{16 \left(2 \omega^3 \cos(\omega)+\omega^2 \left(-3+\omega^2\right) \sin(\omega)+\sin(\omega)^3\right)}{\omega^5 \left(-1+2 \omega^2+\cos(2 \omega)\right)}.
\]
For certain values of $\omega$, this expression vanishes and the recurrence coefficient $\alpha_2$ does not exist. The first such value occurs near $\omega=2\pi$, and this value can be numerically computed to be $\omega^*_1=5.92966\hdots$. The values of $\omega$ for which $(p_2^\omega,p_2^\omega)=0$ are difficult to compute explicitly, but one can give some information for large values of $\omega$. Dividing by $\omega^4$ throughout, we obtain that $(p_2^\omega,p_2^\omega)=0$ is equivalent to
\[
 \frac{2\cos(\omega)}{\omega}+\left(1-\frac{3}{\omega^2}\right) \sin(\omega)+\frac{\sin(\omega)^3}{\omega^4}=0.
\]
Hence, to leading order we have the zeros of $\sin\omega$ as solutions, so $\omega_k=k\pi+\mathcal{O}(k^{-1})$, $k\to\infty$. The first correction to this estimation gives
\[
\omega_k=k\pi-\frac{2}{k\pi}+\mathcal{O}(k^{-3})
\]
This is illustrated in Figure \ref{Fig:norm}, where also $|\frac{\partial x_+}{\partial \omega}|$ is plotted. The figure shows that the zeros of $(p_2^\omega,p_2^\omega)$ appear to coincide with the zeros of $|\frac{\partial x_+}{\partial \omega}|$, and thus these values of $\omega$ correspond to the the cusps in Fig. \ref{Fig:2Gausspoints}. We may conclude from this that there is a breakdown of the recurrence \eqref{eq:recurrence} for countably many values of $\omega$, and that these problematic values correspond to cusps in the curves $x_j(\omega)$. As a result, $p_3^\omega$ is undefined for these special values of $\omega$, just like $p_1^\omega$ was undefined for values of $\omega$ that are exact multiples of $\pi$. However, by avoiding the recurrence and by computing with the moments as in eq. \eqref{eq:asystem}, for example, one can still compute the orthogonal polynomial of degree $4$ for all values of $\omega$.

\begin{figure}
\begin{center}
\includegraphics[width=.7\columnwidth]{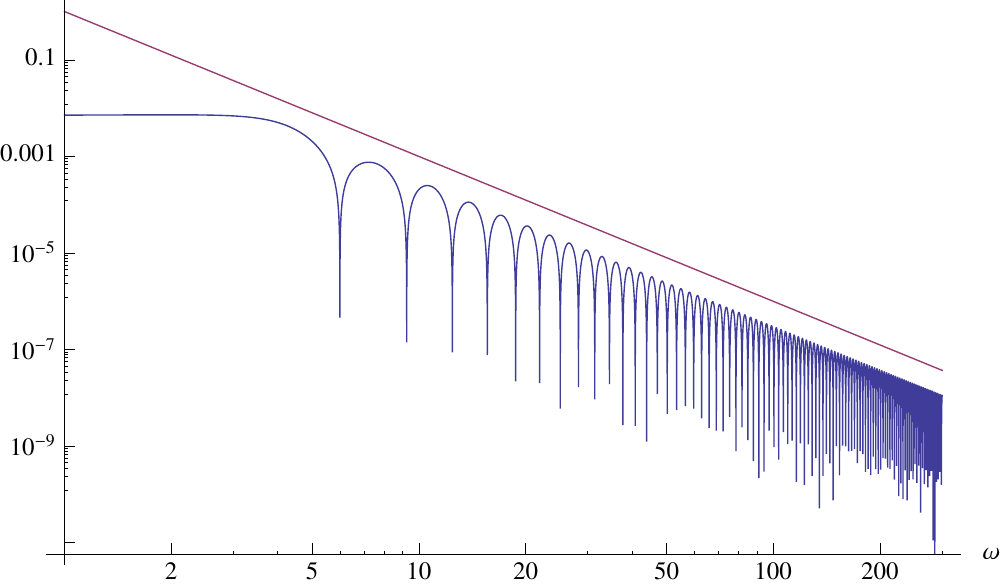}
\caption{Absolute error of the rule with two Gaussian points for the integral $\int_{-1}^1\sin(x)e^{i\omega x}{\rm d}x$. The straight line shows $\omega^{-3}$.}\label{Fig:2Gausspoints_exp}
\end{center}
\end{figure}

The quadrature method obtained from the superinterpolation points has asymptotic order $3$, and this method presumably has the same order. Applying it to the integral
\[
\int_{-1}^1\sin(x)e^{i\omega x}{\rm d}x,
\]
the error is shown in Fig. \ref{Fig:2Gausspoints_exp}. We see that the resulting method indeed appears to have asymptotic order $3$. Note that a Filon-type method with two quadrature points is in general only expected to have asymptotic order $2$. This can be achieved by using the endpoints $\pm 1$, or any two points that move towards $\pm 1$ at a rate of $1/\omega$ \cite{Iserles:2004ke}.

\subsection{Numerical experiments for $n>2$}

\begin{figure}
\begin{center}
\parbox{\textwidth}{
\includegraphics[width=\columnwidth]{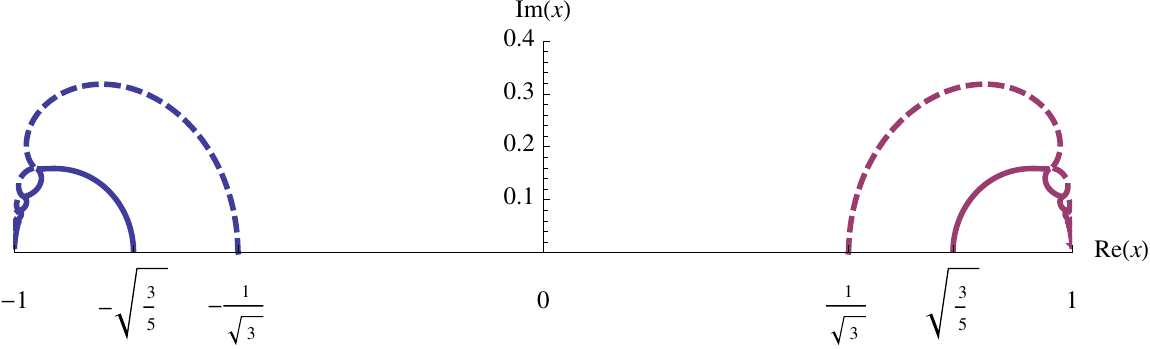}
\caption{Three Gaussian points for the oscillatory integral start out at the Gauss-Legendre points for $\omega=0$ and follow these curves in the complex plane for increasing $\omega$ (solid lines). The third point is always purely imaginary and is not shown. The dashed lines are the trajectories of the roots of $p_2^\omega$, as in Fig. \ref{Fig:2Gausspoints}.}\label{Fig:3Gausspoints}
}
\parbox{\textwidth}{
\includegraphics[width=\columnwidth]{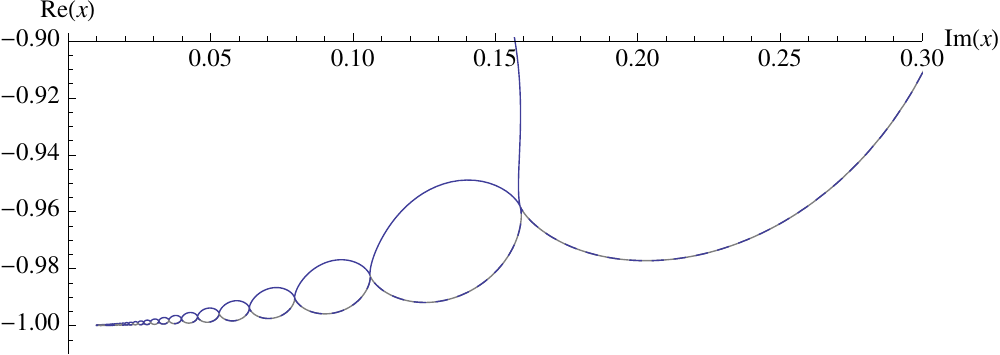}
\caption{Reflecting and zooming in on the left curves in Figure \ref{Fig:3Gausspoints} near $-1$.}
}
\end{center}
\end{figure}

For $n>2$, obtaining closed forms expressions for zeros of the orthogonal polynomials is highly impractical, if at all possible. Turning to numerics, the coefficients of the monic polynomial can be computed from the Hankel system \eqref{eq:asystem}, and the roots can be found numerically. Note that this system is likely to be ill-conditioned. The computations in this paper have been carried out in high precision in Maple.

The roots for $n=3$ behave in a similar manner as in the case $n=2$. In Fig. \ref{Fig:3Gausspoints}, we see that the cusps in this case seem to coincide with the cusps in the case $n=2$. However, there is an extra root which is always on the imaginary axis, a consequence of the symmetry of the polynomials with respect to the imaginary axis. In Fig \ref{Fig:3Gausspoints2}, one observes that this root is indeed undefined for a set of discrete values $\omega$ identified as the cusps in the case $n=2$.

\begin{figure}
\begin{center}
\includegraphics[width=.7\columnwidth]{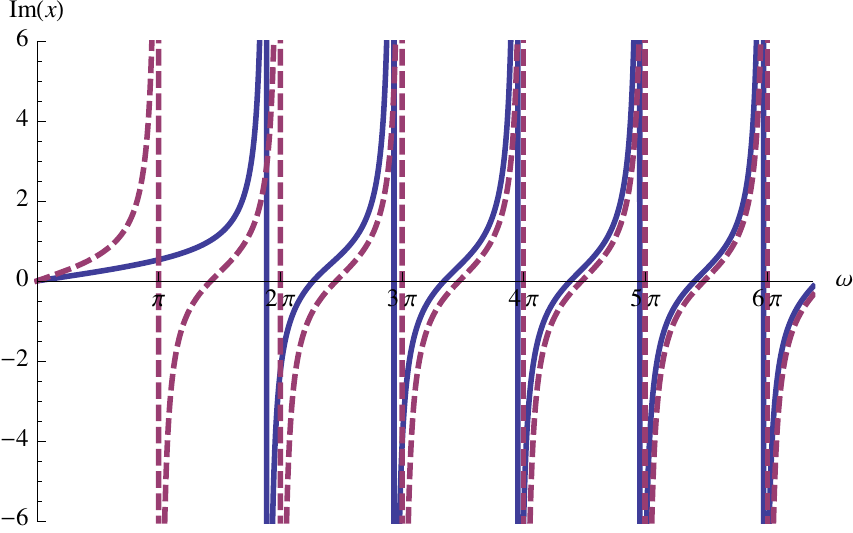}
\caption{The size of the imaginary root of $p_3^\omega$. The size of the root of $p_1^\omega$, Eq. \eqref{eq:n1}, is shown as a dashed curve.}\label{Fig:3Gausspoints2}
\end{center}
\end{figure}

One can also compute zeros of polynomials of higher order. For $n=16$, the zeros behave qualitatively in much the same way as the zeros of $p_{2}^\omega$. The cusps in the curves correspond to the same critical values of $\omega$ regardless of the root, but these values are different from those in the case $n=2$. The next experiment concerns the behaviour of the nodes for large $\omega$. Figure \ref{Fig:16Gausspoints2} shows the difference between the $8$ roots of $p_{16}^\omega$ near $-1$ and the $8$ corresponding superinterpolation points. It appears that the difference goes like $\mathcal{O}(\omega^{-2})$, similar to what was established for the case $n=2$. For large values of $\omega$, the roots of the orthogonal polynomial tend to the superinterpolation points.
\begin{figure}
\begin{center}
\includegraphics[width=.9\columnwidth]{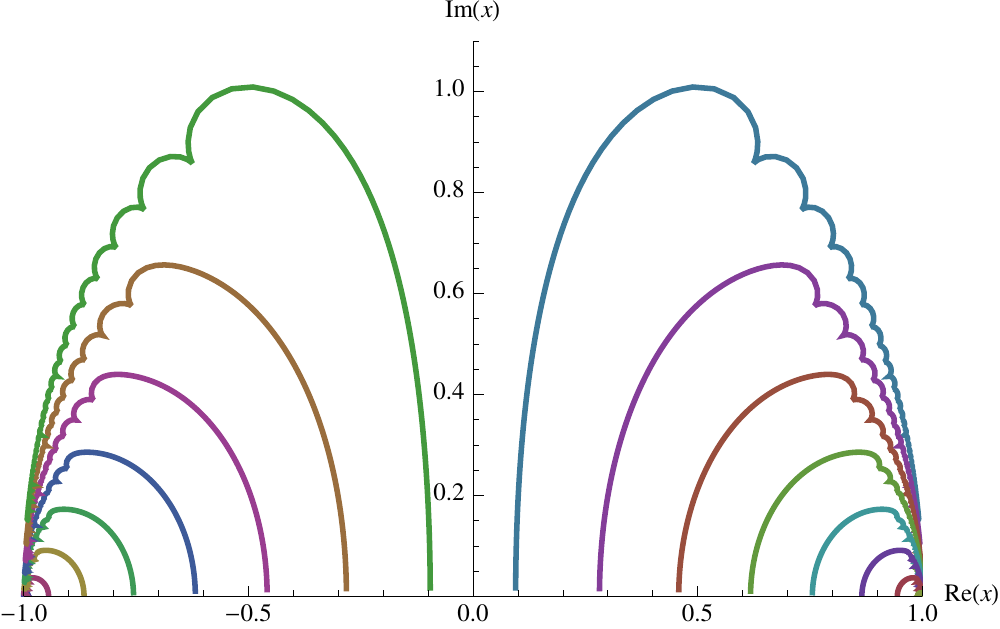}
\caption{Sixteen Gaussian points for the oscillatory integral start out at the Gauss-Legendre points on $[-1,1]$ for $\omega=0$ and follow these curves in the complex plane for increasing $\omega$.}\label{Fig:16Gausspoints}
\end{center}
\end{figure}

\begin{figure}
\begin{center}
\includegraphics[width=.9\columnwidth]{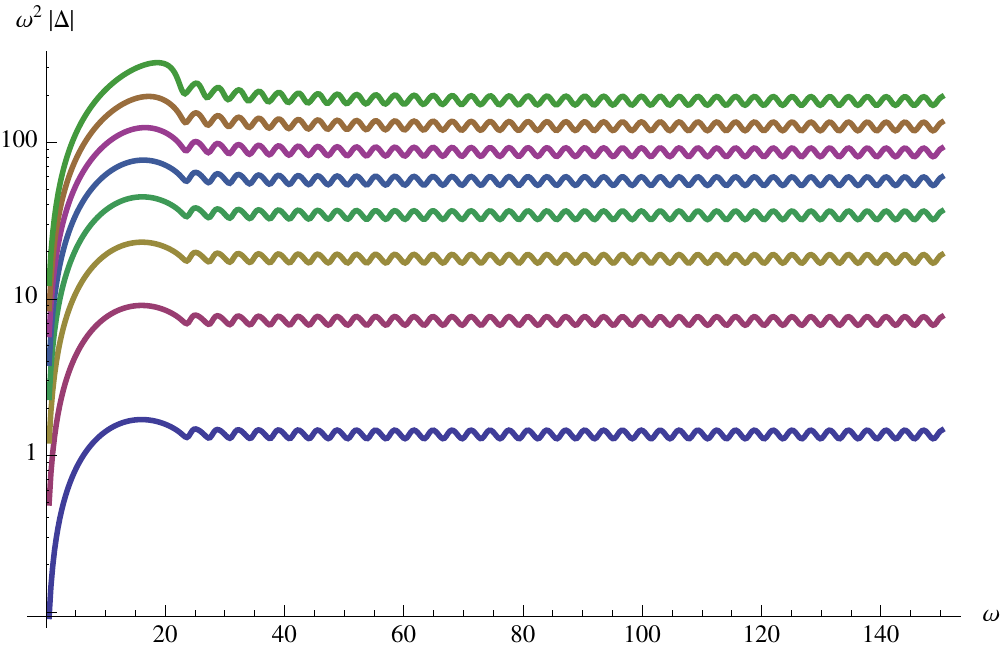}
\caption{The absolute difference between the first $8$ roots of $p_{16}^\omega$ and the corresponding superinterpolation nodes, scaled by $\omega^2$.}\label{Fig:16Gausspoints2}
\end{center}
\end{figure}

\subsection{Conjectures based on observations}

From the above discussion we can list several conjectures:
\begin{enumerate}
\item Rules with an even number of points exist for all $\omega$.
\item Rules with an odd number of points are not defined for all $\omega$.
\item In particular, $p_{2n+1}^\omega$ is not defined for those critical values of $\omega$ that correspond to the cusps of the roots of $p_{2n}^\omega$.
\item The polynomials $p_{n}^\omega$ are symmetric with respect to the imaginary axis.
\item The roots of $p_{2n}^\omega$ approach the $2n$ superinterpolation points at a rate of $\mathcal{O}(\omega^{-2})$ in the high-frequency limit.
\item The Gaussian rule based on the zeros of $p_{2n}^\omega$ has asymptotic order $\mathcal{O}(\omega^{-2n-1})$.
\end{enumerate}
In this paper we shall not prove the first conjecture, on the existence of even-degree polynomials, but assume it to be true. We shall prove conjectures 3 (Lemma \ref{lem:symmetry}) and 4 in \S\ref{S:properties} and conjectures 5 and 6 in \S\ref{S:4}.

\section{Properties of the orthogonal polynomials}\label{S:properties}

In this section we set out to describe a number of interesting properties of the orthogonal polynomials that are useful later on to explain the observations that were made in the previous section.

\subsection{Symmetry}
Symmetries of weight functions and intervals lead to symmetries of the corresponding polynomials. The complex exponential weight function has symmetries with respect to the imaginary axis, in the sense that  $w(z) = \overline{w(-\overline{z})}$. Note that the point $-\overline{z}$ is the reflection of $z$ with respect to the imaginary axis. This leads to the following.

\begin{lemma}\label{lem:symmetry}
 Let $w(z)$ be a weight function such that $w(z) = \overline{w(-\overline{z})}$ and $\Gamma$ be a contour that is invariant under reflection with respect to the imaginary axis, i.e.,
\[
 z \in \Gamma \Rightarrow -\overline{z} \in \Gamma.
\]
If a unique monic polynomial $p_n$ of degree $n$ exists that satisfies the orthogonality conditions
\[
 \int_\Gamma p_n(z) z^k w(z) dz = 0, \qquad k=0,\ldots,n-1,
\]
then
\begin{equation}\label{eq:symmetry}
 p_n(z) = (-1)^n \overline{p_n(-\overline{z})}.
\end{equation}
\end{lemma}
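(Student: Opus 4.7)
The natural plan is to exhibit the polynomial $q_n(z) := (-1)^n \overline{p_n(-\overline{z})}$ and show that it is monic of degree $n$ and satisfies exactly the same orthogonality conditions as $p_n$. Uniqueness then forces $q_n = p_n$, which is precisely \eqref{eq:symmetry}.

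Step 1 is routine. Writing $p_n(z) = z^n + \sum_{k=0}^{n-1} a_k z^k$, direct expansion gives
\[
 q_n(z) = (-1)^n\!\left[ (-1)^n \overline{z}^n + \sum_{k=0}^{n-1} a_k (-1)^k \overline{z}^k\right]^{-} \!\!\!= z^n + \sum_{k=0}^{n-1} (-1)^{n+k}\overline{a_k}\, z^k,
\]
so $q_n$ is monic of degree $n$. (I write this compactly rather than grinding through it.)

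Step 2 is the real content. Starting from the orthogonality of $p_n$ and taking the complex conjugate, I would change variables by $u = -\overline{z}$ on $\Gamma$. Parametrising $\Gamma$ by $\gamma(t)$ and setting $\tilde\gamma(t) := -\overline{\gamma(t)}$, we get $du = -\overline{\gamma'(t)}\,dt$; since $\Gamma$ is invariant under reflection, $\tilde\gamma$ retraces $\Gamma$ (for the canonical examples such as $[-1,1]$ or a circle centred at the origin, with reversed orientation). Combining the $(-1)$ from $du$ with the sign from the orientation reversal, together with $(-\overline{z})^k = (-1)^k \overline{z}^k$ and the weight hypothesis $\overline{w(-\overline{z})} = w(z)$, I expect the computation to collapse to
\[
 0 = \overline{\int_\Gamma p_n(z) z^k w(z)\,dz} = (-1)^k \int_\Gamma \overline{p_n(-\overline{z})}\, z^k\, w(z)\,dz,
\]
for $k=0,\dots,n-1$. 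Multiplying by $(-1)^n$ shows that $q_n$ also satisfies the orthogonality conditions.

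Step 3 is immediate: by the assumed uniqueness of the monic orthogonal polynomial of degree $n$, $q_n \equiv p_n$, which is the claim. The main obstacle is the bookkeeping in Step 2: the substitution $z \mapsto -\overline{z}$ is not holomorphic, so one cannot simply invoke Cauchy's theorem, and the interplay between the complex conjugate of $dz$, the factor $(-u)^k$, and the orientation behaviour of $\Gamma$ under reflection must each be tracked individually. In the examples relevant to this paper, the reflection reverses orientation, and it is this cancellation of signs that makes the argument work cleanly; one should state this orientation convention explicitly in the proof to avoid ambiguity.
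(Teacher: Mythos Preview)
Your approach is essentially identical to the paper's: both define the reflected polynomial $(-1)^n\overline{p_n(-\overline{z})}$ (or equivalently $\overline{p_n(-\overline{z})}$ up to the constant), verify it satisfies the same orthogonality conditions via the substitution $z\mapsto -\overline{z}$ combined with the weight symmetry, and then invoke uniqueness and monicity to pin down the constant. Your Step~2 is in fact more careful than the paper's version, which writes the change of variables and the extraction of the complex conjugate without commenting on $\overline{dz}$ or orientation; your explicit remark that the reflection reverses orientation on contours like $[-1,1]$, and that this sign cancels against the one from $du=-\overline{\gamma'(t)}\,dt$, fills a gap the paper leaves implicit.
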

\begin{proof}
Using the symmetry of $\Gamma$ and of the weight function respectively, we find
\[
 \begin{aligned}
 \int_\Gamma p_n(z) z^k w(z) dz & = \int_\Gamma p_n(-\overline{z}) (-\overline{z})^k w(-\overline{z}) dz \\
 &= (-1)^k \int_\Gamma p_n(-\overline{z}) \overline{z^k} \, \overline{w(z)} dz \\
 &= (-1)^k \overline{\int_\Gamma \overline{p_n(-\overline{z})} z^k w(z) dz} = 0.
\end{aligned}
\]
Thus, $\overline{p_n(-\overline{z})}$ satisfies the same orthogonality conditions as $p_n(z)$. Since the latter is unique, we must have that
 \[
  \overline{p_n(-\overline{z})} = c p_n(z),
 \]
 for some constant $c$. Using that $p_n$ is monic yields $c=(-1)^n$.
\end{proof}
The weight function $e^{i \omega x}$ satisfies the required symmetry of Lemma \ref{lem:symmetry} and so does the interval $[-1,1]$. Therefore the polynomials $p_n^\omega$ satisfy the symmetry \eqref{eq:symmetry}. This implies among other things that they are either purely real or purely imaginary along the imaginary axis, depending on the parity of $n$.

\subsection{Derivatives with respect to $\omega$}

We gain useful knowledge about the polynomials by differentiating with respect to $\omega$. Surprisingly, this differentiation yields the orthogonal polynomial of smaller degree.

\begin{theorem}\label{th:derivatives}
 The derivative of $p_n^\omega$ with respect to $\omega$ satisfies
 \begin{equation}\label{eq:derivatives}
  \frac{\partial p_n^\omega}{\partial \omega}(x) = -i \beta_n \, p_{n-1}^\omega(x), \qquad n = 1, 2, \ldots,
 \end{equation}
 where the proportionality constant, which depends on $\omega$,
 \begin{equation}\label{eq:beta_n}
  \beta_n = \frac{ (p_n^\omega, p_n^\omega)}{(p_{n-1}^\omega, p_{n-1}^\omega)}
 \end{equation}
is one of the recurrence coefficients \eqref{eq:reccoeffs} of the orthogonal polynomials.
\end{theorem}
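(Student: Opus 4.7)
The plan is to differentiate the orthogonality conditions defining $p_n^\omega$ with respect to $\omega$ and use the appearance of the factor $ix$ coming from $\partial_\omega e^{i\omega x}$ to match one expansion coefficient. Concretely, since $p_n^\omega$ is monic of degree $n$, its leading coefficient is $\omega$-independent, so $\partial_\omega p_n^\omega$ is a polynomial of degree at most $n-1$. I would therefore expand
\[
\frac{\partial p_n^\omega}{\partial \omega}(x) = \sum_{k=0}^{n-1} c_k\, p_k^\omega(x),
\]
and aim to show that $c_k = 0$ for $k \le n-2$ and $c_{n-1} = -i\beta_n$. The expansion is justified because the $p_k^\omega$ form a basis of polynomials of degree $\le n-1$, and the coefficients can be recovered via $c_k = (\partial_\omega p_n^\omega, p_k^\omega)/(p_k^\omega,p_k^\omega)$ using the orthogonality $(p_j^\omega,p_k^\omega)=0$ for $j\ne k$ (this uses that $(p_k^\omega,p_k^\omega)\ne 0$, which is implicit in the existence assumption).

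Next, I would differentiate the defining orthogonality $(p_n^\omega, p_k^\omega)=0$ for $0 \le k \le n-1$ with respect to $\omega$. Using Leibniz and $\partial_\omega e^{i\omega x} = ix\,e^{i\omega x}$, this yields
\[
\Bigl(\tfrac{\partial p_n^\omega}{\partial \omega}, p_k^\omega\Bigr) + \Bigl(p_n^\omega, \tfrac{\partial p_k^\omega}{\partial \omega}\Bigr) + i\bigl(p_n^\omega, x\, p_k^\omega\bigr) = 0.
\]
Because $\partial_\omega p_k^\omega$ has degree $\le k-1 \le n-2$, the second term vanishes by orthogonality of $p_n^\omega$ to all polynomials of degree $<n$. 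Similarly, $x p_k^\omega$ has degree $k+1$, so the third term vanishes whenever $k+1 < n$, i.e.\ $k \le n-2$. This immediately gives $(\partial_\omega p_n^\omega, p_k^\omega) = 0$, and hence $c_k = 0$, for all $k \le n-2$.

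For the critical case $k = n-1$, the third term no longer vanishes: $x p_{n-1}^\omega$ is a monic polynomial of degree $n$, so $x p_{n-1}^\omega - p_n^\omega$ has degree $<n$ and is orthogonal to $p_n^\omega$, giving $(p_n^\omega, x p_{n-1}^\omega) = (p_n^\omega, p_n^\omega)$. The differentiated identity therefore reduces to
\[
\Bigl(\tfrac{\partial p_n^\omega}{\partial \omega}, p_{n-1}^\omega\Bigr) = -i\,(p_n^\omega,p_n^\omega),
\]
and dividing by $(p_{n-1}^\omega,p_{n-1}^\omega)$ yields $c_{n-1} = -i\beta_n$, which is exactly \eqref{eq:derivatives}.

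The only real subtlety is the lack of a genuine inner product: $(\cdot,\cdot)$ is bilinear rather than sesquilinear, so the usual orthogonal-expansion argument needs a brief justification. This is not a serious obstacle, though, since all that is needed is $(p_j^\omega,p_k^\omega) = \delta_{jk}(p_k^\omega,p_k^\omega)$ with nonzero diagonal entries, which follows from the assumed existence and uniqueness of the $p_k^\omega$ up to degree $n$. Beyond that, the computation is essentially a single line of differentiation together with the monic-degree bookkeeping above.
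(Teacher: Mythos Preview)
Your proof is correct and follows essentially the same strategy as the paper's: differentiate the orthogonality conditions with respect to $\omega$, use that $p_n^\omega$ is monic so $\partial_\omega p_n^\omega$ has degree at most $n-1$, and identify the single surviving coefficient from the $k=n-1$ equation. The only cosmetic difference is that the paper tests against the monomials $x^k$ (which are $\omega$-independent, so no second Leibniz term appears), whereas you test against the $p_k^\omega$ themselves and then correctly dispose of the extra term $(p_n^\omega,\partial_\omega p_k^\omega)$ by degree count; the two computations are otherwise line-for-line equivalent.
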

\begin{proof}
Define the quantities
\begin{equation}\label{eq:Fk}
F_k(\omega) = \int_{-1}^1 x^k p_n^\omega(x) e^{i \omega x} dx.
\end{equation}
Note that by orthogonality of the polynomials the first $n$ functions vanish identically,
\[
 F_k(\omega)=0, \qquad k=0,\ldots,n-1.
\]
Differentiating with respect to $\omega$ yields
\begin{equation}\label{eq:diff1}
 F_k'(\omega) = \int_{-1}^1 x^k \frac{\partial p_n^\omega}{\partial \omega}(x) e^{i \omega x} dx + i \int_{-1}^1x^{k+1} p_n^\omega(x) e^{i \omega x} dx =: G_k(\omega) + iF_{k+1}(\omega).
\end{equation}
Since $F_k'(\omega)=0$ for $k=0,\ldots,n-1$ and $F_{k+1}(\omega)=0$ for $k=0,\ldots,n-2$, we must have that
\[
G_k(\omega) =  \int_{-1}^1 x^k \frac{\partial p_n^\omega}{\partial \omega}(x) e^{i \omega x} dx = 0, \qquad k=0,\ldots,n-2.
\]
Because $p_n^\omega$ is monic, $\frac{\partial p_n^\omega}{\partial \omega}$ is a polynomial of degree at most $n-1$. By the relations above, it satisfies the exact same orthogonality conditions as $p_{n-1}^\omega$. Thus, there must be a constant depending on $\omega$, but not on $x$, such that
\[
 \frac{\partial p_n^\omega}{\partial \omega}(x) = c_n(\omega) p_{n-1}^\omega(x).
\]
It remains to determine this constant. Setting $k=n-1$ in \eqref{eq:diff1} we find
 \begin{equation}\label{eq:some_expression}
  F_{n-1}'(\omega) = G_{n-1}(\omega) + i F_n(\omega) = 0.
 \end{equation}
 Note from the orthogonality of $p_n^\omega$ that
 \[
  F_n(\omega) = \int_{-1}^1 x^n p_n^\omega(x) e^{i \omega x} dx = (p_n^\omega, p_n^\omega).
 \]
 Similarly, we find that
 \[
  G_{n-1}(\omega) = c_n(\omega) \int_{-1}^1 x^{n-1} p_{n-1}^\omega(x) e^{i \omega x} dx = c_n(\omega) (p_{n-1}^\omega, p_{n-1}^\omega).
 \]
 Combined with \eqref{eq:some_expression} this leads to the result.
\end{proof}

\subsection{Three-term recurrence relation}

As it turns out, the coefficients $\alpha_k$ and $\beta_k$ of the three-term recurrence relation \eqref{eq:recurrence} can be computed themselves by a recursion. The following result follows from taking the derivative with respect to $\omega$ of the recurrence relation. Note that $\alpha_k$ and $\beta_k$ are always functions of $\omega$. In the following we frequently omit this dependency and we denote the derivative of $\alpha_k$ with respect to $\omega$ simply by $\alpha_k'$ and we do so similarly for $\beta_k$.

\begin{theorem}\label{th:recurrence}
 Let $\alpha_k$ and $\beta_k$ be the $\omega$-dependent recurrence coefficients for $p_n^\omega$, defined by \eqref{eq:reccoeffs}. Then we have
 \[
  \beta_{k+1} = \beta_k - i \alpha_k'
 \]
 and
 \[
  \alpha_{k+1} = \alpha_k - i \frac{ \beta_{k+1}'}{\beta_{k+1}}.
 \]
\end{theorem}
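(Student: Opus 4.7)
The plan is to differentiate the three-term recurrence \eqref{eq:recurrence} with respect to $\omega$ and systematically apply Theorem \ref{th:derivatives} to convert every $\omega$-derivative of an orthogonal polynomial into a multiple of a polynomial of lower degree, after which matching coefficients in the basis of the $p_j^\omega$'s yields both identities.

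First I would differentiate
\[
p_{k+1}^\omega(x) = (x-\alpha_k) p_k^\omega(x) - \beta_k p_{k-1}^\omega(x)
\]
with respect to $\omega$, denoting $\partial_\omega$ by a prime. This produces
\[
(p_{k+1}^\omega)'(x) = -\alpha_k' p_k^\omega(x) + (x-\alpha_k)(p_k^\omega)'(x) - \beta_k' p_{k-1}^\omega(x) - \beta_k (p_{k-1}^\omega)'(x).
\]
Next, I would apply Theorem \ref{th:derivatives} to each $\omega$-derivative appearing here, replacing $(p_j^\omega)'$ by $-i\beta_j p_{j-1}^\omega$. The resulting identity involves $p_k^\omega$, $(x-\alpha_k)p_{k-1}^\omega$, $p_{k-1}^\omega$, and $p_{k-2}^\omega$, so it is not yet expressed in a single basis.

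The key simplification is to rewrite the factor $(x-\alpha_k) p_{k-1}^\omega$ using the recurrence at index $k-1$, namely $(x-\alpha_{k-1})p_{k-1}^\omega = p_k^\omega + \beta_{k-1} p_{k-2}^\omega$, and correcting by $(\alpha_{k-1}-\alpha_k)p_{k-1}^\omega$. After substitution, the $p_{k-2}^\omega$ terms cancel exactly, and the relation reduces to a linear combination of $p_k^\omega$ and $p_{k-1}^\omega$ being zero. Since $\{p_k^\omega, p_{k-1}^\omega\}$ are linearly independent, matching the coefficient of $p_k^\omega$ yields
\[
-i\beta_{k+1} = -\alpha_k' - i\beta_k,
\]
which is the first identity $\beta_{k+1}=\beta_k - i\alpha_k'$, while matching the coefficient of $p_{k-1}^\omega$ gives
\[
\beta_k' = i\beta_k(\alpha_k - \alpha_{k-1}),
\]
i.e.\ $\alpha_k - \alpha_{k-1} = -i\beta_k'/\beta_k$, and shifting $k\to k+1$ delivers the second identity.

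The only real subtlety is the algebraic bookkeeping in the $p_{k-2}^\omega$ cancellation and ensuring the formulas are valid at the boundary $k=0$ or $k=1$, where one must interpret $p_{-1}^\omega = 0$ and $\beta_0$ according to the convention in \eqref{eq:reccoeffs}; this is a minor check rather than a real obstacle. The main conceptual step is recognising that after differentiation, every term can be pushed into the finite-dimensional span $\{p_{k-2}^\omega, p_{k-1}^\omega, p_k^\omega\}$, so that the recurrences for $\alpha_k$ and $\beta_k$ fall out from linear independence.
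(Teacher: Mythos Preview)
Your proof is correct and follows essentially the same approach as the paper: differentiate the three-term recurrence with respect to $\omega$, apply Theorem \ref{th:derivatives} to each $\partial_\omega p_j^\omega$, and then read off the two identities by comparing coefficients. The only cosmetic difference is that the paper first matches the leading (monic) coefficient to get $\beta_{k+1}=\beta_k-i\alpha_k'$ and then divides through and compares with the recurrence for $p_k^\omega$, whereas you expand $(x-\alpha_k)p_{k-1}^\omega$ via the index-$(k-1)$ recurrence and match in the basis $\{p_k^\omega,p_{k-1}^\omega,p_{k-2}^\omega\}$; these are equivalent bookkeeping choices.
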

\begin{proof}
Differentiating the recurrence \eqref{eq:recurrence} with respect to $\omega$ yields
\[
\frac{\partial p_{k+1}^\omega}{\partial \omega}(x)=(x-\alpha_k) \frac{\partial p_k^\omega}{\partial \omega}(x)-\alpha'_k p_k^\omega(x) -\beta'_k p_{k-1}^\omega(x)- \beta_k \frac{\partial p_{k-1}^\omega}{\partial \omega}(x).
\]
Using Thm. \ref{th:derivatives} and collecting terms leads to
\[
(-i\beta_{k+1}+\frac{\partial \alpha_k}{\partial \omega}) p_k^\omega(x)=((x-\alpha_k)(-i\beta_k)-\beta_k') p_{k-1}^\omega(x)+i\beta_{k-1} \beta_k p_{k-2}^\omega(x).
\]
Matching the leading order coefficients on both sides, using that both $p_k^\omega$ and $p_{k-1}^\omega$ are monic, we must have $-i\beta_{k+1}+\frac{\partial \alpha_k}{\partial \omega}=-i\beta_k$. The recurrence for $\beta_k$ follows from this. Dividing over yields
\[
p_k^\omega(x)=(x-\alpha_k-(-i\beta_k)^{-1} \beta_k' ) p_{k-1}^\omega(x)-\beta_{k-1}  p_{k-2}^\omega(x),
\]
and the recurrence for $\alpha_k$ follows by comparing to the regular recurrence for $p_k^\omega$,
\[
 p_k^\omega(x) = (x-\alpha_{k-1}) p_{k-1}^\omega(x)-\beta_{k-1}  p_{k-2}^\omega(x).
\]
\end{proof}

\begin{remark}
In the theory of orthogonal polynomials, these are sometimes known as the deformation equations for the recurrence coefficients. General expressions of this kind can be obtained whenever a weight function of exponential type is perturbed with a parameter, see for instance \cite[Prop. 2.1]{bleher2005asymptotics} and references therein.
\end{remark}

\subsection{Trajectories of the roots}\label{ss:trajectories}

Finally, we intend to show that the trajectories of the roots in the complex plane may have cusps. If so, the norm of the corresponding orthogonal polynomial vanishes, and as a result the orthogonal polynomial of one degree higher does not exist.

We start by showing the equivalence between the vanishing derivatives of the roots $x_j'$ and the vanishing derivatives of the polynomial $p_n^\omega$. As in the previous section we frequently omit the dependency of the roots on $\omega$ in our notation.

\begin{theorem}\label{thm:derivatives}
Assume that $p_n^\omega$ exists and let $x_j(\omega)$, $j=1,\hdots,n$, denote its zeros. If, for a given $\omega^*$, we have
\begin{equation}\label{eq:zerosder}
x_j'(\omega^*) = 0,\quad j=1,\hdots,n,
\end{equation}
then
\[
(p_n^{\omega^*},p_n^{\omega^*})=0.
\]
If $p_n^\omega$ is uniquely defined, then the converse is also true.
\end{theorem}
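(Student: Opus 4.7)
The plan is to compare two distinct expressions for $\partial_\omega p_n^\omega$. Theorem~\ref{th:derivatives} provides
\[
\frac{\partial p_n^\omega}{\partial \omega}(x) = -i\beta_n\, p_{n-1}^\omega(x),
\]
with $\beta_n = (p_n^\omega, p_n^\omega)/(p_{n-1}^\omega, p_{n-1}^\omega)$, while logarithmic differentiation of the factorisation $p_n^\omega(x) = \prod_{j=1}^n(x-x_j(\omega))$ gives
\[
\frac{\partial p_n^\omega}{\partial \omega}(x) = -\sum_{j=1}^n x_j'(\omega)\prod_{k\neq j}\bigl(x - x_k(\omega)\bigr).
\]
Equating these two polynomial identities at $\omega=\omega^*$ is the workhorse of the argument.

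For the forward direction, I would assume $x_j'(\omega^*)=0$ for every $j$. Then the sum on the right vanishes identically in $x$, so $\partial_\omega p_n^\omega \equiv 0$ at $\omega^*$. Since $p_{n-1}^{\omega^*}$ is monic of degree $n-1$ it is not the zero polynomial, so Theorem~\ref{th:derivatives} forces $\beta_n(\omega^*)=0$, i.e., $(p_n^{\omega^*}, p_n^{\omega^*})=0$. The implicit hypothesis $(p_{n-1}^{\omega^*}, p_{n-1}^{\omega^*})\neq 0$, needed for $\beta_n$ to make sense, is automatic in the setting where the orthogonal polynomials arise from the three-term recurrence.

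For the converse, I would additionally use that $p_n^\omega$ is uniquely defined in a neighbourhood of $\omega^*$: Cramer's rule applied to the Hankel system \eqref{eq:asystem} then guarantees that the coefficients of $p_n^\omega$ depend smoothly on $\omega$, so the maps $\omega\mapsto x_j(\omega)$ are smooth wherever the roots are distinct. The hypothesis $(p_n^{\omega^*}, p_n^{\omega^*})=0$ gives $\beta_n(\omega^*)=0$ and hence $\partial_\omega p_n^\omega\equiv 0$ at $\omega^*$ by Theorem~\ref{th:derivatives}. Evaluating the factored expression at $x=x_l(\omega^*)$ for each $l$, the distinctness of the $x_k$ means only the $l$-th term survives, leaving $x_l'(\omega^*)\prod_{k\neq l}(x_l - x_k)=0$, and the nonvanishing product forces $x_l'(\omega^*)=0$.

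The delicate point, and where I would expect most of the work to sit, is the case in which two or more roots coincide at $\omega^*$. There the root functions $x_j(\omega)$ can acquire branch-point behaviour, so $x_j'(\omega^*)$ need not exist in the classical sense. One resolution is to restrict the converse to configurations with simple roots, which matches the numerical observations and is the generic situation; a more thorough treatment would express $p_n^\omega$ via its (smooth) coefficients, parametrise each colliding branch by a local Puiseux series, and verify that $\partial_\omega p_n^\omega\equiv 0$ still translates into vanishing of the appropriate branch derivatives via a multiplicity count.
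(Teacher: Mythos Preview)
Your forward direction is exactly the paper's argument: factor $p_n^\omega$, differentiate to get \eqref{eq:diffxi}, observe that the hypothesis kills the sum, and then invoke Theorem~\ref{th:derivatives} to force $\beta_n(\omega^*)=0$.

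The converse differs slightly. You again invoke Theorem~\ref{th:derivatives} to pass from $(p_n^{\omega^*},p_n^{\omega^*})=0$ to $\partial_\omega p_n^\omega\equiv 0$, which tacitly requires $p_{n-1}^{\omega^*}$ to exist and $(p_{n-1}^{\omega^*},p_{n-1}^{\omega^*})\neq 0$. The paper avoids that extra hypothesis: it goes back to the relations \eqref{eq:diff1} and observes directly that $(p_n^{\omega^*},p_n^{\omega^*})=0$ forces $\partial_\omega p_n^\omega\big|_{\omega^*}$ to satisfy the full set of $n$ orthogonality conditions defining $p_n^\omega$; since that derivative has degree at most $n-1$, the \emph{uniqueness} of $p_n^\omega$ (the stated hypothesis) then forces it to vanish identically. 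This is a cleaner match to the theorem's assumptions. Conversely, your treatment is more explicit than the paper's about the final step---reading off $x_j'(\omega^*)=0$ from $\partial_\omega p_n^\omega\equiv 0$ via evaluation at the roots---and you correctly flag the multiple-root caveat, which the paper leaves unaddressed.
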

\begin{proof}
Writing $p_n^\omega$ in terms of its factors, we have
\[
p_n^\omega(x) = \prod_{j=1}^n (x-x_j(\omega) ).
\]
Differentiating with respect to $\omega$ yields
\begin{equation}\label{eq:diffxi}
\frac{\partial p_n^\omega(x)}{\partial \omega} = -\sum_{l=1}^n x_l'(\omega) \prod_{j=1,j\neq l}^n (x-x_j(\omega)).
\end{equation}
From this and condition \eqref{eq:zerosder} it follows that $\frac{\partial p_n^\omega}{\partial \omega}\Big{|}_{\omega=\omega^*}\equiv0$, i.e., the partial derivative of $p_n^\omega$ vanishes identically. By Theorem \ref{th:derivatives} we find that $\omega^*$ must be a root of $\beta_n(\omega)$, from which we conclude in turn that $(p_n^\omega,p_n^\omega)$ must vanish at $\omega=\omega^*$.

Recall the functions $F_k(\omega)$ from \eqref{eq:Fk} and their derivatives \eqref{eq:diff1}. Letting $k=n-1$, we find from the above and \eqref{eq:diff1} that
\[
 0=F_n(\omega^*)=\int_{-1}^1 p_n^{\omega^*}(x)x^n e^{i\omega^* x}{\rm d}x=(p_n^{\omega^*},p_n^{\omega^*}).
\]

To prove the converse, we assume $(p_n^{\omega^*},p_n^{\omega^*})=0$, which leads using \eqref{eq:diff1} again to
\[
\int_{-1}^1 \frac{\partial p_n^{\omega}(x)}{\partial \omega}\Big{|}_{\omega=\omega^*}x^ke^{i\omega^* x}{\rm d}x=0,\qquad k=0,\hdots,n-1.
\]
This means that $\frac{\partial p_n^{\omega}}{\partial \omega}\Big{|}_{\omega=\omega^*}$ satisfies the orthogonality conditions of $p_{n}^\omega$. However, it is a polynomial of degree $n-1$ and since $p_n^\omega$ exists uniquely, the lower-degree polynomial can only satisfy the above conditions if it vanishes identically.
\end{proof}

If $(p_n^{\omega^*},p_n^{\omega^*})=0$ for some value of $n$ and $\omega^*$, then the monic orthogonal polynomial $p_{n+1}^{\omega^*}$ of degree $n+1$ does not exist. In fact, the polynomial $p_n^{\omega^*}$ satisfies all of the orthogonality conditions that $p_{n+1}^{\omega^*}$ should satisfy, since
\[
 \int_{-1}^1 x^k p_n^{\omega^*}(x) e^{i \omega x} dx = 0, \qquad k=0,\ldots,n.
\]
This explains why the roots of $p_3^\omega$ agree with those of $p_2^\omega$ in Figure \ref{Fig:3Gausspoints} at specific values of $\omega$. The third root, as a function of $\omega$, has poles at these values of $\omega$.

\section{Properties of the quadrature rule}\label{S:4}

Since the weight function considered here is non-positive, only few of the classical results for orthogonal polynomials apply. We do not fully settle the questions of existence and uniqueness of the polynomials in this paper. Yet, several interesting properties of the quadrature rule can be established and we start with the asymptotic order.

\subsection{Asymptotic order}\label{ss:asymptotic}
The following theorem establishes a connection between polynomial accuracy and asymptotic accuracy, on the assumption that the quadrature points cluster near the endpoints. Note that a Gaussian quadrature rule with an odd number of points in our setting does not qualify here, since due to the symmetry at least one root is always on the imaginary axis. In the notation of the following theorem, a Gaussian rule with an even number of points corresponds to $N=4n$ and $M=2n$, and the theorem thus explains the error behaviour we observe in Fig. \ref{Fig:2Gausspoints_exp}.

\begin{theorem}\label{thm:asymptotic}
Let $\{x_j,w_j\}_{j=1}^{2n}$ be an $2n$-point quadrature rule with polynomial order $N$, i.e.,
\[
\sum_{j=1}^{2n} w_j x_j^k = \int_{-1}^1 x^k e^{i\omega x} {\rm d}x,\qquad k=0,1\hdots,N-1.
\]
and assume that $N \geq 2n$, i.e., the rule is at least interpolatory. 

If the nodes $x_j$ can be split in two groups $\{x_j^1\}_{j=1}^n$ and $\{x_j^{2}\}_{j=1}^n$, such that $x_j^1=-1+\mathcal{O}(\omega^{-1})$ and $x_j^2=1+\mathcal{O}(\omega^{-1})$, $j=1,\hdots,n$, then, for an analytic function $f$ the error has the asymptotic decay
\[
\sum_{j=1}^{2n} w_j f(x_j)-\int_{-1}^1 f(x)e^{i\omega x}{\rm d}x =\mathcal{O}(\omega^{-M-1}),
\]
where $M=\lfloor N/2 \rfloor$.
\end{theorem}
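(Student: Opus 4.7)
The plan is to reduce the quadrature error to an integral, use polynomial exactness to introduce an auxiliary polynomial we are free to subtract, and then read off the decay from repeated integration by parts.

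First, since $N \geq 2n$ the rule is at least interpolatory. Let $L$ be the Lagrange interpolant of $f$ at the $2n$ nodes: $\deg L < 2n$ so the quadrature integrates $L$ exactly, and $L$ agrees with $f$ at the nodes, so the quadrature sums for $f$ and $L$ coincide. Hence the error equals $-I[f-L]$. Setting $v(x) = \prod_j (x - x_j) = q_-(x) q_+(x)$ with $q_\pm(x) = \prod_{j=1}^n (x - x_j^{\pm})$, analyticity of $f$ on a fixed neighbourhood of $[-1,1]$ containing all nodes (automatic for large $\omega$ since the nodes cluster at $\pm 1$) yields a factorisation $f - L = v \cdot h$ with $h$ analytic and, via the contour representation $h(x) = \frac{1}{2\pi i}\oint_\Gamma \frac{f(z)}{v(z)(z-x)}\,dz$ on a fixed contour $\Gamma$ enclosing $[-1,1]$, uniformly bounded in $\omega$ together with all its derivatives on $[-1,1]$.

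Second, polynomial exactness of degree $<N$ produces a useful orthogonality: for any polynomial $r$ with $\deg r < N-2n$, the product $vr$ has degree $<N$ and vanishes at every node, so
\[
I[vr] = \sum_j w_j\, v(x_j)\, r(x_j) = 0,
\]
and hence $I[vh] = I[v(h-r)]$. I would take $r$ to be the Hermite interpolant of $h$ at $\pm 1$ with multiplicities $m_-$ and $m_+$ satisfying $m_- + m_+ = N - 2n$ and $|m_+ - m_-| \leq 1$, giving $\min(m_+,m_-) = \lfloor (N-2n)/2 \rfloor$ and $\deg r = N - 2n - 1$. By construction $h - r$ vanishes to order $m_\pm$ at $\pm 1$.

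Third, I would apply the standard $S$-fold integration-by-parts expansion
\[
I[v(h-r)] = \sum_{s=0}^{S-1}\frac{(-1)^s\bigl[(v(h-r))^{(s)}(1)e^{i\omega} - (v(h-r))^{(s)}(-1)e^{-i\omega}\bigr]}{(i\omega)^{s+1}} + R_S,
\]
with $R_S = \mathcal{O}(\omega^{-S})$, and estimate each boundary term. Working in the shifted variable $y = x + 1$, the Taylor coefficients of $q_-$ at $y=0$ are (up to sign) the elementary symmetric polynomials of $\epsilon_j := x_j^- + 1 = \mathcal{O}(\omega^{-1})$, hence $q_-^{(s)}(-1) = \mathcal{O}(\omega^{-(n-s)})$ for $0 \leq s \leq n$, while $q_+^{(s)}(-1) = \mathcal{O}(1)$. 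Combined with $(h-r)^{(s)}(-1) = 0$ for $s<m_-$ and Leibniz, this yields $(v(h-r))^{(s)}(-1) = 0$ for $s<m_-$ and $\mathcal{O}(\omega^{-(n+m_- - s)})$ for $m_- \leq s \leq n + m_-$, so each surviving contribution at $x=-1$ is at most $\mathcal{O}(\omega^{-(n+m_-+1)})$ once divided by $(i\omega)^{s+1}$; the symmetric estimate at $x=1$ gives $\mathcal{O}(\omega^{-(n+m_++1)})$. Altogether
\[
E[f] = \mathcal{O}\!\left(\omega^{-(n + \min(m_+,m_-) + 1)}\right) = \mathcal{O}\!\left(\omega^{-(\lfloor N/2 \rfloor + 1)}\right) = \mathcal{O}(\omega^{-M-1}).
\]

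The main technical obstacle is controlling the remainder $R_S$ uniformly in $\omega$, which amounts to showing that $\|(v(h-r))^{(S)}\|_{L^1([-1,1])}$ is bounded independently of $\omega$. This reduces to uniform bounds on $h$ and its derivatives (from the contour formula), on the coefficients of $v$ (since its roots stay in a bounded region), and on the Hermite data defining $r$ (which are derivatives of $h$ at $\pm 1$). The remaining bookkeeping — tracking the vanishing order at each endpoint through elementary symmetric polynomials — is routine once the formulas are set up.
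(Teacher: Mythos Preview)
Your argument is correct and complete; the pieces you flag as ``bookkeeping'' and the uniform control of $R_S$ all go through as you describe, since the contour representation for $h$ immediately gives $\omega$-uniform bounds on $h$ and its derivatives on $[-1,1]$, and the Hermite data for $r$ are just values of those bounded derivatives at $\pm 1$.

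Your route is genuinely different from the paper's. The paper applies a two-point Taylor expansion directly to $f$, writing $f=q_{2M-1}+R(x)(x-1)^M(x+1)^M$, and then treats the quadrature sum and the integral separately: the integral is $\mathcal{O}(\omega^{-M-1})$ by integration by parts, and for the sum the factor $(x_j-1)^M(x_j+1)^M=\mathcal{O}(\omega^{-M})$ is combined with a separate estimate $w_j=\mathcal{O}(\omega^{-1})$, proved by evaluating the Lagrange basis at $\pm 1$ and tracking the size of numerators and denominators. You instead collapse the error to a single integral $-I[v h]$ via the interpolation remainder, then use polynomial exactness to subtract $I[vr]=0$ with $r$ a Hermite interpolant of $h$, and extract the full order from a single integration-by-parts computation on $I[v(h-r)]$. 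Your approach avoids the weight estimate entirely and is insensitive to how the nodes within each cluster are separated, because $h$ is controlled by a contour integral with $|v(z)|$ bounded below on $\Gamma$; the paper's Lagrange-basis argument, by contrast, implicitly needs the denominators $\prod_{i\neq j}(x_j-x_i)$ not to be too small relative to the numerators. On the other hand, the paper's decomposition is slightly more direct and yields the weight decay $w_j=\mathcal{O}(\omega^{-1})$ as a by-product, which can be useful in its own right.
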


\begin{proof}
From \cite{2pointtaylor} we have that an analytic function $f$ can be written as
\[
f(x)=q_{2M-1}(x)+R(x)(x+1)^{M}(x-1)^{M},
\]
where $q_{2M-1}(x)$ is a polynomial of degree $2M-1\leq N-1$, and $R(x)$ is analytic. The error of the method is
\[
\sum_{j=1}^{2n} w_jf(x_j)-\int_{-1}^1 f(x)e^{i\omega x}{\rm d}x
\]
\[
=\sum_{j=1}^{2n} w_jR(x_j)(x_j+1)^{M}(x_j-1)^{M}-\int_{-1}^1 R(x)(x+1)^{M}(x-1)^{M}e^{i\omega x}{\rm d}x.
\]
By integration by parts the integral in this expression has asymptotic size $\mathcal{O}(\omega^{-M-1})$. The terms in the sum have a factor which is $\mathcal{O}(\omega^{-M})$, since $x_j=\pm 1+\mathcal{O}(\omega^{-1})$. It remains to show that $w_j=\mathcal{O}(\omega^{-1})$, $j=1,\hdots,2n$. Applying the method to the $j$-th Lagrange polynomial $l_j(x)$, we have, 
\[
w_j=\int_{-1}^1 l_j(x)e^{i\omega x}{\rm d}x = -\sum_{k=0}^{2n-1} \frac{l_j^{(k)}(x)}{(-i\omega)^{k+1}}e^{i\omega x}\Big{|}_{x=-1}^{x=1}.
\]
To conclude that $w_j=\mathcal{O}(\omega^{-1})$, we need that $l_j^{(k)}(\pm 1)=\mathcal{O}(\omega^k)$. Now assume the node $x_j$ is the first member of the group $1$: $x_j=x_{1}^1$. Writing $l_j(x)$ in terms of the two groups of nodes we have
\[
l_j(x) = \prod_{i=2}^n \frac{x-x_i^1}{x_{1}^1-x_i^1} \prod_{i=1}^n \frac{x-x_i^2}{x_{1}^1-x_i^2}.
\]
The second of these products is clearly bounded with its derivatives, and we can, by Leibniz's formula, concentrate on the first. Here, the denominator is of order $\mathcal{O}(\omega^{-n+1})$. Similarly, the numerator is of order $\mathcal{O}(\omega^{-n+1})$, when evaluated in $x=-1$. Differentiating will lower the degree of the numerator,  
\[
\frac{d}{dx}\prod_{i=2}^n (x-x_i^1)=\sum_{l=2}^n\prod_{i=2,i\neq l}^n(x-x_i^1).
\]
Evaluating in $x=-1$ gives that the numerator is of order $\mathcal{O}(\omega^{-n+2})$. In general the $k$-th derivative of the numerator is of order $\mathcal{O}(\omega^{-n+k+1})$, for $k\geq 1$, so
\[
\frac{l_j^{(k)}(x)}{(-i\omega)^{k+1}}e^{i\omega x}\Big{|}_{x=-1}^{x=1}=\mathcal{O}(\omega^{-1}).
\]
Similarly one shows that $l_j^{(k)}(- 1)=\mathcal{O}(\omega^k)$. The argument can be repeated for any $x_j$, regardless of which group it belongs to.
\end{proof}

\subsection{Large $\omega$ behaviour of $p_n^\omega$}

Next, a rather remarkable fact will be demonstrated, namely that pointwise we have that 
\begin{equation}\label{eq:limit}
p_{2n}^\omega(x)\rightarrow \left(\frac{i}\omega\right)^{2n}L_{n}(-i\omega (x+1))L_{n}(-i\omega (x-1)), \qquad \omega\to\infty,
\end{equation}
where $L_n(x)$ is the classical Laguerre polynomial of degree $n$. That is, the orthogonal polynomial becomes the product of two rescaled classical orthogonal polynomials in the limit $\omega\to\infty$. This is, in fact, the polynomial that vanishes at the superinterpolation points. To prove this we need the following intermediate result:

\begin{lemma}\label{thm:genV}
Consider a vector of $n>1$ points in $\mathbb{C}$, $x_1,\hdots,x_n$, and a sequence of integers $\alpha_1<\alpha_2<\hdots<\alpha_n$. We construct the generalised Vandermonde matrix $G=\{x_i^{\alpha_j}\}_{i,j=1}^n$,
\[
 G =
 \begin{pmatrix}
 x_1^{\alpha_1} & x_1^{\alpha_2} & \cdots & x_1^{\alpha_n}\\
 x_2^{\alpha_1} & x_2^{\alpha_2} & \cdots & x_2^{\alpha_n}\\
 \vdots &  \vdots & \ddots &   \vdots\\
  x_n^{\alpha_1} & x_n^{\alpha_2} & \cdots & x_n^{\alpha_n}\\
 \end{pmatrix}
\]
Then 
\[
{\rm det}(G)=L(x_1,x_2,\hdots,x_n)\prod_{1\leq i<j\leq n} (x_i-x_j),
\]
where $L(x_1,x_2,\hdots,x_n)$ is a polynomial in $x_1,\hdots,x_n$.
\end{lemma}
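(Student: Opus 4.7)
The plan is to recognize this as the classical bialternant identity (the prototype of the Schur-polynomial numerator formula) and prove it by the standard root-and-divisibility argument in the polynomial ring $\mathbb{C}[x_1,\ldots,x_n]$.

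First, I would verify that $\det(G)$ is a genuine polynomial in $x_1,\ldots,x_n$. If all $\alpha_j\geq 0$, this is immediate from the Leibniz expansion
\[
\det(G)=\sum_{\sigma\in S_n}\mathrm{sgn}(\sigma)\prod_{i=1}^n x_i^{\alpha_{\sigma(i)}},
\]
which is a sum of $n!$ monomials. If some $\alpha_j$ are negative, I would first factor $x_i^{\alpha_1}$ out of the $i$-th row, reducing to the matrix with nonnegative exponents $0<\alpha_2-\alpha_1<\cdots<\alpha_n-\alpha_1$; the overall monomial prefactor $\prod_i x_i^{\alpha_1}$ can then be absorbed into $L$.

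Second, I would observe that substituting $x_i=x_j$ for any pair $i\neq j$ makes two rows of $G$ identical, so $\det(G)$ vanishes identically on the hyperplane $\{x_i=x_j\}$. Since the linear form $x_i-x_j$ is irreducible in the unique factorization domain $\mathbb{C}[x_1,\ldots,x_n]$, this forces $(x_i-x_j)\mid\det(G)$ in that ring.

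Third, I would invoke unique factorization to combine these $\binom{n}{2}$ divisibilities. The linear forms $\{x_i-x_j:1\leq i<j\leq n\}$ are pairwise non-associate irreducibles, hence pairwise coprime, so their product also divides $\det(G)$. The quotient
\[
L(x_1,\ldots,x_n):=\frac{\det(G)}{\displaystyle\prod_{1\leq i<j\leq n}(x_i-x_j)}
\]
is therefore a polynomial, which is exactly the claim.

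There is no genuine obstacle here; the argument is entirely standard. The only care needed is the correct appeal to unique factorization when combining the individual linear factors into the full Vandermonde product, and, in the case of negative $\alpha_j$'s, the preliminary row reduction that brings the entries into $\mathbb{C}[x_1,\ldots,x_n]$. For the intended use in proving the pointwise limit \eqref{eq:limit}, the polynomial nature of $L$ is what will be required to control $\det(G)$ as $\omega\to\infty$.
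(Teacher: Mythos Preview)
Your argument is essentially identical to the paper's: both observe that $\det(G)$ is a polynomial in $x_1,\ldots,x_n$, that it vanishes whenever two of the variables coincide (duplicate rows), and hence that each linear form $x_i-x_j$ is a factor, so the full Vandermonde product divides $\det(G)$. You are slightly more explicit about the unique factorization step needed to combine the individual divisibilities; the paper simply writes ``the result follows from this.''

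One small caveat: your side remark handling negative $\alpha_j$ does not quite work. If $\alpha_1<0$, the prefactor $\prod_i x_i^{\alpha_1}$ is not a polynomial and cannot be ``absorbed into $L$'' while keeping $L$ a polynomial. In fact the lemma as literally stated fails for negative exponents: with $n=2$ and $(\alpha_1,\alpha_2)=(-1,0)$ one gets $\det(G)=-(x_1-x_2)/(x_1x_2)$, so $L=-1/(x_1x_2)$. In the paper's only application (to the minors $V^{j,i}$ of the Vandermonde matrix in Lemma~\ref{thm:zeros}) the exponents are always a subset of $\{0,1,\ldots,2n-1\}$, so the $\alpha_j$ should be read as nonnegative and your main argument then applies without modification.
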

\begin{proof}
Consider the determinant as a function of $x_1,\hdots,x_n$,
\[
{\rm det}(G)=H(x_1,\hdots,x_n).
\]
By expanding the determinant along any row, it is apparent that $H(x_1,\hdots,x_n)$ is actually a polynomial.
Now
\[
H(x_j,x_2,\hdots,x_n)=0,\qquad j=2,3,\hdots,n,
\]
since this corresponds to a matrix with duplicate rows. This implies that $x_1-x_j$, $j=2,\hdots,n$ is a factor of $H$. Similarly, if one considers $H$ in terms of the remaining arguments, one sees that $x_i-x_j$, $i\neq j$, are all factors of $H$. The result follows from this.
\end{proof}

The key to showing something like \eqref{eq:limit}, is to look at $p_n^\omega$ evaluated in the superinterpolation points.

\begin{lemma}\label{thm:zeros}
Assume $p_{2n}^\omega$ is bounded in $\omega$ together with all its derivatives. Let $x_j$, $j=1,\hdots,2n$, denote the superinterpolation points. Then 
\[
p_{2n}(x_j)=\mathcal{O}(\omega^{-n-1}), \qquad j=1,\hdots,2n.
\]
\end{lemma}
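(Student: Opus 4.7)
The plan is to combine the orthogonality relations of $p_{2n}^\omega$ with the steepest-descent identity \eqref{eq:sd} and the $n$-point Gauss-Laguerre rule, in such a way that the $2n$ values $p_{2n}^\omega(x_j)$ at the superinterpolation points become the unknowns of a $2n\times 2n$ Vandermonde system. The generalised Vandermonde identity of Lemma \ref{thm:genV} then controls the inverse of this system.

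First I would apply \eqref{eq:sd} to each orthogonality condition $\int_{-1}^1 p_{2n}^\omega(x) x^k e^{i\omega x} dx = 0$, $k=0,\ldots,2n-1$, and then approximate each of the two resulting integrals on $[0,\infty)$ by the $n$-point Gauss-Laguerre rule. The Gauss-Laguerre nodes $\xi_j$ are mapped exactly onto the superinterpolation points $x_j^{\pm} = \pm 1 + i\xi_j/\omega$, so the values $p_{2n}^\omega(x_j^\pm)$ appear as the evaluations of the integrand. The integrand is a polynomial in $t$ of degree at most $4n-1$, and Gauss-Laguerre is exact up to degree $2n-1$, so only coefficients of $t^r$ with $r\geq 2n$ contribute to the quadrature error. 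Taylor-expanding $p_{2n}^\omega(\pm 1 + it/\omega)(\pm 1+it/\omega)^k$ around $t=0$ and using that $p_{2n}^\omega$ together with all its derivatives is bounded uniformly in $\omega$, the coefficient of $t^r$ comes out as $\mathcal{O}(\omega^{-r})$, and summing gives a Gauss-Laguerre error of $\mathcal{O}(\omega^{-2n})$ on each integral.

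After multiplying the orthogonality equations through by $\omega$ and rearranging, one obtains a linear system $V \vec u = \vec r$, where $V$ is the classical $2n\times 2n$ Vandermonde matrix in the superinterpolation points, $\vec u$ collects the unknowns $\eta_j p_{2n}^\omega(x_j^\pm)$ multiplied by unit-modulus phase factors, and $\|\vec r\|_\infty = \mathcal{O}(\omega^{-2n})$. By Cramer's rule the $j$-th unknown is $u_j = \det(V_j)/\det V$; each cofactor in the expansion of $\det(V_j)$ along the $j$-th column is a generalised Vandermonde determinant in the $2n-1$ remaining superinterpolation points, which Lemma \ref{thm:genV} factors as a bounded polynomial $L$ times the Vandermonde product $\prod_{i<l,\, i,l\neq j}(x_l^*-x_i^*)$. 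This product cancels the corresponding factors of $\det V$, leaving each cofactor-to-determinant ratio equal, up to sign and the bounded factor $L$, to $1/\prod_{i\neq j}(x_j^*-x_i^*)$.

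The crucial quantitative step is then geometric: the denominator $\prod_{i\neq j}(x_j^*-x_i^*)$ contains exactly $n-1$ factors of size $\mathcal{O}(\omega^{-1})$ coming from points in the same cluster as $x_j^*$, together with $n$ factors of size $\Theta(1)$ from the opposite cluster, so it is $\Theta(\omega^{-(n-1)})$. Combining, $|u_j| = \mathcal{O}(\omega^{n-1})\cdot \mathcal{O}(\omega^{-2n}) = \mathcal{O}(\omega^{-n-1})$, and dividing by the positive constant $\eta_j$ yields the claim. The main obstacle is the careful bookkeeping through the three layers of steepest descent, Gauss-Laguerre quadrature, and Vandermonde inversion; the boundedness hypothesis on $p_{2n}^\omega$ and its derivatives is precisely what makes the $\mathcal{O}(\omega^{-2n})$ Gauss-Laguerre estimate hold, and it is the cluster geometry of the superinterpolation points that supplies the extra $\omega^{n-1}$ factor under inversion.
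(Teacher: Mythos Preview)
Your argument is correct and essentially identical to the paper's: both obtain a Vandermonde system in the values $p_{2n}^\omega(x_j)$ with right-hand side $\mathcal{O}(\omega^{-2n-1})$ and then invert it via Lemma~\ref{thm:genV} together with the cluster geometry of the superinterpolation points. The only difference is that the paper quotes the quadrature error bound from \cite[Th.~3.2]{Huybrechs:2012gm}, whereas you derive it explicitly through the steepest-descent identity \eqref{eq:sd} and the Gauss--Laguerre remainder.
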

\begin{proof}
The orthogonality condition for $p_{2n}^\omega$ is
\[
\int_{-1}^1p_{2n}(x)x^je^{i\omega x}{\rm d}x = 0, \qquad j=0,\hdots,2n-1.
\]
Applying the interpolatory quadrature rule based on interpolation at the superinterpolation points, along with the assumption on the boundedness of $p_{2n}^\omega$, gives \cite[Th.3.2]{Huybrechs:2012gm}
\[
\sum_{i=1}^{2n}w_i p_{2n}(x_i)x_i^j=\mathcal{O}(\omega^{-2n-1}), \qquad j=0,\hdots,2n-1.
\]
Here the weights are given in terms of the Gauss-Laguerre weights $\eta_j$,
\begin{equation}\label{eq:w}
w_j=w_{j+n}=\frac{\eta_j}{\omega}.
\end{equation}
Denoting $y=[w_1p_{2n}^\omega(x_1),w_2p_{2n}^\omega(x_2),\hdots,w_{2n}p_{2n}^\omega(x_{2n})]^T$, this is the Vandermonde system
\begin{equation}\label{eq:V}
Vy=\mathcal{O}(\omega^{-2n-1}),
\end{equation}
where $V=\{x_i^{j-1}\}_{i,j=1}^{2n}$. By Cramer's rule we have
\[
V^{-1}=\frac{{\rm Adj}(V)}{{\rm det}(V)}.
\]
Entries of ${{\rm Adj}(V)}$ are computed in terms of determinants: 
\[
{{\rm Adj}(V)}_{i,j}=(-1)^{i+j}{\rm det}(V^{j,i}),
\]
where $V^{j,i}$ is $V$ with row $j$ and column $i$ deleted. Using this we can find the asymptotics of $V^{-1}$ elementwise. First, using the well known formula for the Vandermonde determinant,
\[
{\rm det}(V)=\prod_{1\leq i,j \leq 2n}(x_i-x_j) = C\omega^{-n(n-1)}+\mathcal{O}(\omega^{-n(n-1)-1}),
\]
where $C\neq 0$, which follows from the fact that the points belong two groups of $n$ points which are $\omega^{-1}$ close to either $-1$ or $1$. Similarly, if we delete one row and one column of $V$, one of the groups will be missing a point, and $V^{j,i}$ will be a generalised Vandermonde matrix. The form of the determinant is given by Lemma \ref{thm:genV}, and from this it follows that 
\[
{\rm det}(V^{j,i}) = \mathcal{O}(\omega^{-(n-1)^2}), \qquad 0<i,j\leq 2n.
\]
From this it follows that $V^{-1}= \mathcal{O}(\omega^{n-1})$ element wise. This further gives, from \eqref{eq:V}, that $y=\mathcal{O}(\omega^{-n-2})$, elementwise, and thus, by \eqref{eq:w}, the desired result.
\end{proof}

Finally we can state the correspondence between roots of $p_{2n}^\omega$ and superinterpolation points, $\omega\to\infty$.

\begin{theorem}\label{thm:roots}
Let $x_j$, $j=1,\hdots,2n$ denote the $2n$ superinterpolation points. Assume $p_{2n}^\omega$ exists, and that it is bounded with all its derivatives in $\omega$.
For each $y_j$ such that $p_{2n}^\omega(y_j)=0$, $j=1,\hdots,2n$, there is a corresponding index $l$ such that 
\[
y_j = x_l + {\mathcal O}(\omega^{-2}) ,\qquad \omega \to\infty.
\]
\end{theorem}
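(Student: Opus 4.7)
The plan is to apply Rouché's theorem on small discs centered at the superinterpolation points. Let $q(z) = \prod_{l=1}^{2n}(z - x_l)$, a monic polynomial of degree $2n$ whose zeros are precisely the superinterpolation points. Since $p_{2n}^\omega$ is also monic of degree $2n$, the difference $r(z) := p_{2n}^\omega(z) - q(z)$ is a polynomial of degree at most $2n-1$. By Lemma \ref{thm:zeros}, its values at the superinterpolation points satisfy $r(x_l) = p_{2n}^\omega(x_l) = \mathcal{O}(\omega^{-n-1})$. Since the $2n$ points $x_l$ are distinct, Lagrange interpolation yields
\[
r(z) = \sum_{l=1}^{2n} r(x_l)\prod_{k\neq l}\frac{z-x_k}{x_l-x_k}.
\]

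Fix a superinterpolation point $x_{l_0}$, say in the group near $-1$, and consider the circle $|z-x_{l_0}| = \rho$ with $\rho = C\omega^{-2}$ for a constant $C$ to be chosen. The essential input is the known spacing of the $x_l$: within either group the mutual distances are $\Theta(\omega^{-1})$, while across groups they are $\Theta(1)$. On this circle I would show that $|q(z)| = \rho\prod_{k\neq l_0}|z-x_k| \geq c_0 C \omega^{-(n+1)}$ for an $\omega$-independent $c_0 > 0$, by using $|z-x_k| = |x_{l_0}-x_k|(1+\mathcal{O}(\omega^{-1}))$ on the circle. For $|r(z)|$, the $l=l_0$ Lagrange term contributes $|r(x_{l_0})|\cdot(1+\mathcal{O}(\omega^{-1})) = \mathcal{O}(\omega^{-n-1})$ with an implied constant independent of $C$, and each $l\neq l_0$ term carries an extra factor $|z-x_{l_0}|/|x_l-x_{l_0}|$ that is $\mathcal{O}(\omega^{-1})$ when $x_l$ lies in the same group and $\mathcal{O}(\omega^{-2})$ otherwise. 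Consequently $|r(z)| \leq C_1\omega^{-(n+1)}$ on the circle, with $C_1$ independent of $C$.

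Choosing $C > C_1/c_0$ then forces $|r(z)| < |q(z)|$ on $\partial D_{l_0}$. Rouché's theorem applied to $p_{2n}^\omega = q + r$ implies $p_{2n}^\omega$ has exactly one zero inside the disc $|z-x_{l_0}|<\rho$, which is therefore within $\mathcal{O}(\omega^{-2})$ of $x_{l_0}$. For $\omega$ sufficiently large, the $2n$ discs centered at the distinct $x_l$ are pairwise disjoint, so this procedure locates $2n$ distinct zeros of $p_{2n}^\omega$; since $p_{2n}^\omega$ has only $2n$ zeros altogether, every root $y_j$ lies within $\mathcal{O}(\omega^{-2})$ of some superinterpolation point, as claimed. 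The main obstacle is the sharp size comparison in the previous paragraph: one must verify that the bound on $|r(z)|$ is genuinely independent of $C$ (so that enlarging $C$ enlarges $|q(z)|$ without enlarging the majorant of $|r(z)|$), which hinges on the fact that only the $l=l_0$ Lagrange term survives at leading order on the circle, and that even this term inherits its $\omega^{-n-1}$ size directly from $r(x_{l_0})$ via the Lemma rather than from the geometry of the disc.
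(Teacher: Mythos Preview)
Your argument is correct and takes a genuinely different route from the paper. The paper simply factors $p_{2n}^\omega(x)=\prod_j(x-y_j)$, invokes Lemma~\ref{thm:zeros} to get $\prod_j(x_k-y_j)=\mathcal{O}(\omega^{-n-1})$ for each superinterpolation point $x_k$, and then argues informally that since $n$ of the factors are already $\mathcal{O}(\omega^{-1})$ once the $y_j$ split into groups near $\pm 1$, one more factor must be $\mathcal{O}(\omega^{-2})$, forcing $y_j=x_l+\mathcal{O}(\omega^{-2})$. That reasoning is short and intuitive but leaves the counting (why exactly $n$ roots land in each group, and why each $x_l$ is matched) somewhat implicit. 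Your Rouch\'e argument, by contrast, gives the bijection cleanly: one zero per disc, $2n$ disjoint discs, hence every root is accounted for. The price is the more delicate size comparison on the circles, which you handle correctly.

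One point to tighten in your write-up: the bound $|r(z)|\le C_1\omega^{-(n+1)}$ with $C_1$ independent of $C$ is not literally true, since the $l\neq l_0$ Lagrange terms and the $(1+\mathcal{O}(\omega^{-1}))$ correction to the $l=l_0$ term each carry a hidden factor of $C$. What is true, and suffices, is that $|r(z)|\le C_1\omega^{-(n+1)}+C_2 C\,\omega^{-(n+2)}$ with $C_1,C_2$ independent of $C$; comparing this with $|q(z)|\ge c_0 C\,\omega^{-(n+1)}$ still lets you pick $C>C_1/c_0$ and then take $\omega$ large enough (depending on $C$) to absorb the lower-order term. You essentially say this in your last paragraph, but the earlier sentence overstates the independence from $C$.
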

\begin{proof}
Writing $p_{2n}^\omega$ in terms of its factors, we have
\[
p_{2m}^\omega(x)=\prod_{j=1}^{2n}(x-y_j).
\]
From Lemma \ref{thm:zeros} we have that 
\[
\prod_{j=1}^{2n}(x_k-y_j)=\mathcal{O}(\omega^{-n-1}), \qquad k=1,\hdots,2n.
\]
Or, in terms of the Gauss-Laguerre points,
\[
\prod_{j=1}^{n}(\pm 1+\frac{i\xi_k}{\omega}-y_j)(\mp 1+\frac{i\xi_k}{\omega}-y_j)=\mathcal{O}(\omega^{-n-1}), \qquad k=1,\hdots,n.
\]
From this expression, it is clear that for each $j=1,\hdots,2n$ we must have $y_j\sim \pm1$, which gives $\mathcal{O}(\omega^{-n})$. The only way $\mathcal{O}(\omega^{-n-1})$ can be attained is if 
\[
y_j = \frac{i \xi_l}{\omega} + {\mathcal O}(\omega^{-2}) = x_l + {\mathcal O}(\omega^{-2}),
\]
for some $l$.
\end{proof}

\begin{corollary}
Assume $p_{2n}^\omega$ exists, and that it is bounded with all its derivatives in $\omega$. The $2n$ point Gaussian rule, with nodes being the zeros of $p_{2n}^\omega$, is of asymptotic order $n+1$.
\end{corollary}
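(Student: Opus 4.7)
My plan is to combine Theorem \ref{thm:asymptotic} with Theorem \ref{thm:roots}. Since $p_{2n}^\omega$ exists and is bounded in $\omega$ together with all its derivatives by hypothesis, Theorem \ref{thm:roots} applies directly and tells us that each zero $y_j$ of $p_{2n}^\omega$ lies within $\mathcal{O}(\omega^{-2})$ of some superinterpolation point. Since by Definition \ref{def:superinterpolation} the superinterpolation points themselves split into two groups of $n$ points, one within $\mathcal{O}(\omega^{-1})$ of $-1$ and the other within $\mathcal{O}(\omega^{-1})$ of $+1$, the same endpoint-clustering property is inherited by the zeros. This is precisely the geometric hypothesis of Theorem \ref{thm:asymptotic}.

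The Gaussian rule with $2n$ nodes has polynomial order $N = 4n$, since by construction it is exact on $\mathcal{P}_{4n-1}$. Applying Theorem \ref{thm:asymptotic} with $M = \lfloor N/2 \rfloor = 2n$ then yields an error of size $\mathcal{O}(\omega^{-M-1}) = \mathcal{O}(\omega^{-2n-1})$, giving the claimed asymptotic order of the rule.

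The only part of the argument that requires genuine thought is verifying that the correspondence $y_j \mapsto x_l$ produced by Theorem \ref{thm:roots} is actually a bijection, so that the $2n$ zeros really do split into $n$ points near $-1$ and $n$ points near $+1$, rather than piling onto just one side. For $\omega$ large, distinct superinterpolation points are separated on the scale $\omega^{-1}$, while each $y_j$ sits within $\mathcal{O}(\omega^{-2})$ of its $x_l$; a simple pigeonhole argument then forces injectivity, and since there are exactly $2n$ zeros and $2n$ superinterpolation points, surjectivity follows for free. Once this separation-of-scales point is spelled out, the corollary reduces to bookkeeping atop Theorems \ref{thm:asymptotic} and \ref{thm:roots}.
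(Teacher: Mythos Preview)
Your argument is exactly the paper's: invoke Theorem \ref{thm:roots} for the endpoint clustering, then feed $N=4n$, $M=2n$ into Theorem \ref{thm:asymptotic}. The paper's proof is two lines and simply asserts that the roots are ``divided into two equally sized groups''.

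You are right to flag that the equal split needs justification, but the pigeonhole step you sketch does not actually deliver injectivity. Separation of the superinterpolation points on scale $\omega^{-1}$ guarantees that the assignment $y_j\mapsto x_l$ is \emph{well defined} (each $y_j$ is close to at most one $x_l$); it does \emph{not} prevent two distinct roots $y_j,y_{j'}$ from landing within $\mathcal{O}(\omega^{-2})$ of the same $x_l$, since nothing rules out $p_{2n}^\omega$ having nearly coincident zeros. A clean fix is the symmetry Lemma \ref{lem:symmetry}: for even degree one has $p_{2n}^\omega(z)=\overline{p_{2n}^\omega(-\bar z)}$, so the zero set is invariant under $z\mapsto -\bar z$, which interchanges the neighbourhoods of $-1$ and $+1$. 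Since Theorem \ref{thm:roots} places every root near $\pm 1$ (hence off the imaginary axis for large $\omega$), this reflection pairs the roots near $-1$ bijectively with those near $+1$, forcing exactly $n$ on each side.
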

\begin{proof}
From Thm. \ref{thm:roots} we have that all roots behave like $\pm1 +\mathcal{O}(\omega^{-1})$, divided into two equally sized groups. The result thus follows from Thm. \ref{thm:asymptotic}.
\end{proof}

Note that in the last two results we assumed that the polynomial $p_{2n}^\omega$ is bounded in $\omega$, along with all its derivatives with respect to $x$. This, along with the assumption that the polynomials exist for all $\omega$, is not proved in this paper.

\section{Conclusions and outlook}

We have presented a Gaussian quadrature rule for integrals on $[-1,1]$ with weight function $e^{i\omega x}$, $\omega\geq 0$. The associated family of orthogonal polynomials $p_n^\omega$ is non standard
because of the oscillatory nature of the weight function. It is shown that for some critical values of $\omega$, the orthogonal polynomials of odd degree fail to exist, and that phenomenon is related to the fact 
that the bilinear form defined with respect to $e^{i\omega x}$ is not positive definite. However, based on numerical experiments we conjecture that all polynomials of even degree exist 
for any value of $\omega$. We give some results on this family of orthogonal polynomials, but their global behaviour in terms of $\omega$ is bound to be very complicated. We note that they should bridge 
between the Legendre case (when $\omega=0$) and a product of two rotated and scaled Laguerre polynomials in the complex plane, as $\omega\to\infty$. 

We also present results connecting the concept of standard polynomial accuracy of Gaussian quadrature rules and that of asymptotic order in terms of $\omega$, which is of great interest 
in the context of highly oscillatory quadrature. These two ideas are related under the assumption that the zeros of $p_n^\omega$ cluster near the endpoints, something that is observed numerically in the paper.

\section*{Acknowledgements}
Arieh Iserles has on several occasions suggested the authors having a look at the problem of Gaussian quadrature rules for oscillatory integrals over bounded domains. It is an honour to dedicate this work to Arieh on the occasion of his 65th birthday.

\bibliographystyle{plain}
\bibliography{bibliography}

\begin{thebibliography}{10}

\bibitem{integraltransforms}
Andreas Asheim and Daan Huybrechs.
\newblock Gaussian quadrature for oscillatory integral transforms.
\newblock {CW594}, University of Leuven, Department of Computer Science, 2011.

\bibitem{bleher2005asymptotics}
P.M. Bleher and A.R. Its.
\newblock Asymptotics of the partition function of a random matrix model.
\newblock In {\em {A}nn. {I}nst. {F}ourier}, volume~55, pages 1943--2000, 2005.

\bibitem{filon1928quadrature}
L.N.G. Filon.
\newblock On a quadrature formula for trigonometric integrals.
\newblock In {\em Proc. Roy. Soc. Edinburgh}, volume~49, pages 38--47, 1928.

\bibitem{gautschi2004orthogonal}
W.~Gautschi.
\newblock {\em {Orthogonal {P}olynomials: {C}omputation and {A}pproximation}}.
\newblock Oxford University Press, 2004.

\bibitem{Huybrechs:2012gm}
D.~Huybrechs and S.~Olver.
\newblock Superinterpolation in highly oscillatory quadrature.
\newblock {\em Found.Comput.Math}, 12(2):203--228, 2012.

\bibitem{Huybrechs:2006gi}
Daan Huybrechs and Stefan Vandewalle.
\newblock {On the evaluation of highly oscillatory integrals by analytic
  continuation}.
\newblock {\em SIAM J.Numer.Anal}, 44(3):1026--1048, 2006.

\bibitem{Iserles:2004us}
A.~Iserles.
\newblock On the numerical quadrature of highly-oscillating integrals i:
  Fourier transforms.
\newblock {\em IMA J. Numer. Anal.}, 24(3):365--391, 2004.

\bibitem{Iserles:2005tj}
A.~Iserles and S.P. N{\o}rsett.
\newblock Efficient quadrature of highly oscillatory integrals using
  derivatives.
\newblock {\em Proc. R. Soc. A}, 461(2057):1383--1399, 2005.

\bibitem{Iserles:2002gg}
Arieh Iserles.
\newblock {Think globally, act locally: Solving highly-oscillatory ordinary
  differential equations}.
\newblock {\em Appl. Numer. Math.}, 43(1-2):145--160, 2002.

\bibitem{Iserles:2004ke}
Arieh Iserles and S~P N\o{}rsett.
\newblock {On Quadrature Methods for Highly Oscillatory Integrals and Their
  Implementation}.
\newblock {\em BIT}, 44(4):755--772, 2004.

\bibitem{Iserles:2006uh}
Arieh Iserles and Syvert~P N\o{}rsett.
\newblock {On the computation of highly oscillatory multivariate integrals with
  stationary points}.
\newblock {\em BIT}, 46(3):549--566, 2006.

\bibitem{Iserles:2006fi}
Arieh Iserles and Syvert~P. N{\o}rsett.
\newblock {Quadrature methods for multivariate highly oscillatory integrals
  using derivatives}.
\newblock {\em Math. Comp.}, 75(255):1233--1258, 2006.

\bibitem{Ixaru:2001fe}
L~Gr Ixaru and B~Paternoster.
\newblock {A Gauss quadrature rule for oscillatory integrands}.
\newblock {\em Comput. Phys. Commun.}, 133(2-3):177--188, 2001.

\bibitem{Ledoux:2012ww}
V~Ledoux and M~Van~Daele.
\newblock {Interpolatory quadrature rules for oscillatory integrals}.
\newblock {\em J.Sci.Comput}, 53:586--607, 2012.

\bibitem{Levin:1996kp}
D~Levin.
\newblock {Fast integration of rapidly oscillatory functions}.
\newblock {\em J.Comput.Appl.Math.}, 67(1):95--101, 1996.

\bibitem{2pointtaylor}
Jos{\'e}~L L{\'o}pez and Nico~M Temme.
\newblock {Two-point Taylor expansions of analytic functions}.
\newblock {\em Stud.Appl.Math.}, 109(4):297--311, 2002.

\bibitem{olver2010nist}
F.W.J. Olver, D.W. Lozier, R.F. Boisvert, and C.W. Clark.
\newblock {\em NIST Handbook of Mathematical Functions}.
\newblock Cambridge University Press, 2010.

\bibitem{Olver:2006jp}
S.~Olver.
\newblock Moment-free numerical integration of highly oscillatory functions.
\newblock {\em IMA J. Numer. Anal.}, 26(2):213--227, 2006.

\bibitem{Olver:2010jj}
Sheehan Olver.
\newblock {Fast, numerically stable computation of oscillatory integrals with
  stationary points}.
\newblock {\em BIT}, 50(1):149--171, 2010.

\end{thebibliography}

\end{document}